 \def\LaTeX{\leavevmode L\raise.42ex
   \hbox{\kern-.3em\size{\sf@size}{0pt}\selectfont A}\kern-.15em\TeX}
\newcommand{\BibTeX}{{\rm B\kern-.05em{\sc
i\kern-.025emb}\kern-.08em\TeX}}
\newtheorem{thm}{Theorem}[section]
\newtheorem{lem}[thm]{Lemma}
\theoremstyle{definition}
\newtheorem{defn}{Definition}
\numberwithin{equation}{section}
\begin{document}

\title[Cubature formulas and Discrete Fourier transform on manifolds ]{Cubature formulas and Discrete Fourier transform  on compact  manifolds}
\maketitle

\begin{center}
\title{\textit{Dedicated to Leon Ehrenpreis}}
\maketitle
\end{center}
\begin{center}

\bigskip

\author{Isaac Z. Pesenson }\footnote{ Department of Mathematics, Temple University,
 Philadelphia,
PA 19122; pesenson@temple.edu. The author was supported in
part by the National Geospatial-Intelligence Agency University
Research Initiative (NURI), grant HM1582-08-1-0019. }

\author{Daryl Geller }\footnote{Department of Mathematics, Stony Brook University, Stony Brook, NY 11794-3651  (12/26/1950-01/27/2011)}
\end{center}

\begin{abstract}
The goal of the paper is to describe essentially optimal cubature formulas on compact Riemannian manifolds which are exact on spaces of band-limited functions.

\end{abstract}

 {\bf Keywords and phrases:  Laplace operator,
 Plancherel-Polya inequalities, eigenspaces,
cubature formulas, discrete Fourier transform on compact manifolds.}

 {\bf Subject classifications:}
{Primary: 42C99, 05C99, 94A20; Secondary: 94A12 }

 \section{Introduction}

Daryl Geller and I started to work on this paper, dedicated to the memory of Leon Ehrenpreis, in the Fall of 2010.  Sadly, Daryl Geller passed away suddenly in January of 2011. I will always remember him as a good friend and a wonderful mathematician. 

\bigskip

Analysis on two dimensional surfaces and in particular on the sphere $S^{2}$ found
many applications in computerized tomography, statistics, signal
analysis, seismology, weather prediction, and computer vision.
During last years many problems of  classical harmonic analysis were developed for functions on manifolds and  especially for functions on spheres: splines, interpolation, approximation, different aspects of Fourier analysis, continuous and discrete wavelet transform, quadrature formulas. Our list of references is very far from being complete \cite{ANS}-\cite{DH}, \cite{DNW}-\cite{HNSW}, \cite{HMS}-\cite{WF}.
More references can be found in monographs \cite{F}, \cite{LS}.

The goal of the paper is to describe three types of cubature formulas on general compact Riemannian manifolds which require  \textit{essentially optimal number of  nodes}.  Cubature formulas introduced in section 3 are \textit{exact on subspaces of band-limited functions}. Cubature formulas constructed in section 4  are \textit{exact on spaces of variational splines} and, at the same time, \textit{asymptotically exact on spaces of band-limited functions}.
In section 5 we  prove existence  of cubature formulas with \textit{positive weights} which are exact on spaces of band-limited functions.

In section 7 we prove that on homogeneous compact manifolds the product of two band-limited functions is also band-limited. This result  makes our findings about cubature formulas relevant to Fourier transform on homogeneous compact manifolds and allows \textit{exact} computation of Fourier coefficients of band-limited functions on compact \textit{homogeneous} manifolds.

It worth to  note that all results of the first four sections hold true even for non-compact Riemannian manifolds of bounded geometry. In this case one has  properly define spaces of bandlimited functions on  non-compact manifolds \cite{Pes00}.

Let $\bold M$ be a compact Riemannian manifold and $\mathcal L$ is a differential elliptic operator which is self adjoint in $L_{2}(\bold M)=L_{2}(\bold M, dx)$, where $dx$ is the Riemannian measure. 
The spectrum of this operator, say
$0=\lambda_{0}<\lambda_{1}\leq \lambda_{2}\leq ...$,
is discrete and approaches infinity.  Let
$u_{0}, u_{1}, u_{2}, ...$ be a corresponding
complete system of real-valued orthonormal eigenfunctions, and let
$\textbf{E}_{\omega}(\mathcal{L}),\ \omega>0,$ be the span of all
eigenfunctions of $\mathcal{L}$, whose corresponding eigenvalues
are not greater than $\omega$.   For a function $f\in L_{2}(\bold M)$ its Fourier transform is the set of coefficients $\{c_{j}(f)\}$, which are given by formulas
\begin{equation}
\label{FC}
c_{j}(f)=\int_{\bold M}fu_{j}dx.
\end{equation}
By a discrete Fourier transform we understand a discretization of the above formula. Our goal in this paper is to develop cubature formulas of the form
\begin{equation}\label{cubgen}
\int_{\bold M}f\approx \sum_{x_{k}}f(x_{k})w_{k},
\end{equation}
where $\{x_{k}\}$ is a discrete set of points on $\bold M$ and $\{w_{k}\}$ is a set of weights. When creating such formulas one has to address (among others) the following problems:

1.  to make sure that there exists a relatively large class of functions on which such formulas are exact;

2.  to be able to estimate accuracy of such formulas for general functions;

3. to describe optimal sets of points $\{x_{k}\}$ for which the cubature formulas exist;

4. to provide "constructive" ways for determining optimal sets of points $\{x_{k}\}$;

5. to provide "constructive" ways of determining weights $w_{k}$;

6. to describe properties of appropriate weights.

In the first five sections of the paper we construct cubature formulas on general compact Riemannian manifolds and general elliptic second order differential operators. Namely, we have two types of cubature formulas: formulas which are \textit{exact} on spaces $\textbf{E}_{\omega}(\mathcal{L})$ (see section 3), i. e. 
\begin{equation}\label{ER}
\int_{\bold M}f= \sum_{x_{k}}f(x_{k})w_{k}
\end{equation}
and formulas which are \textit{exact} on spaces of variational splines (see section 4). Moreover, the cubature formulas in section 4 are also asymptotically exact on the spaces $\textbf{E}_{\omega}(\mathcal{L}).$ For both types of formulas we address first five issues from the list above. However, in the first four sections we don't discuss the issue 6 from the same list. 

In section 5 we construct another set of cubature formulas which are exact on spaces $\textbf{E}_{\omega}(\mathcal{L})$ which have \textit{positive weights of  the  "right" size}. Unfortunately, for this set of cubatures we unable to provide constructive ways of determining weights  $w_{k}$.

If one considers integrals of the form (\ref{FC}) then  in the general case  we do not have any criterion to determine whether  the product $fu_{j}$ belongs to the space  $\textbf{E}_{\omega}(\mathcal{L})$ in order to have an \textit{exact} relation 
\begin{equation}
\int_{\bold M}fu_{j}= \sum_{x_{k}}f(x_{k})u_{j}(x_{k})w_{k},
\end{equation}
for cubature rules described in sections 1-4.
However, if $\bold M$ is a compact homogeneous manifolds i. e.  $\bold M=G/K$, where $G$ is a compact Lie group and $K$ is its closed subgroup and $\mathcal{L}$ is the second order Casimir operator (see  (\ref{Laplacian}) below) then we can show that for $f, g \in \textbf{E}_{\omega}(\mathcal{L})$ their product $fg$ is in $\textbf{E}_{4d\omega}(\mathcal{L})$, where $d=dim\>G$ (see section 7).

\section{Plancherel-Polya-type inequalities}

Let $B(x,r)$ be a metric ball on ${\bf M}$ whose center is $x$ and
radius is $r$. The following important lemma can be found  in  \cite{Pes00},
\cite{Pes04a}.

\begin{lem}
There exists
a natural number $N_{{\bf M}}$, such that  for any sufficiently small $\rho>0$,
there exists a set of points $\{y_{\nu}\}$ such that:
\begin{enumerate}
\item the balls $B(y_{\nu}, \rho/4)$ are disjoint,

\item  the balls $B(y_{\nu}, \rho/2)$ form a cover of ${\bf M}$,

\item  the multiplicity of the cover by balls $B(y_{\nu}, \rho)$
is not greater than $N_{{\bf M}}.$
\end{enumerate}\label{covL}
\end{lem}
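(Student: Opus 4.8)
The plan is to produce $\{y_{\nu}\}$ as a maximal $\rho/2$-separated subset of $\mathbf M$ and then control the overlap multiplicity of the dilated balls by a volume-counting argument. Among all subsets $S\subset\mathbf M$ with $d(y,y')\ge\rho/2$ for every pair of distinct $y,y'\in S$, choose a maximal one $\{y_{\nu}\}$; such a set exists and is finite because $\mathbf M$ is compact (alternatively one invokes Zorn's lemma). Then (1) and (2) are immediate: if $z\in B(y_{\nu},\rho/4)\cap B(y_{\mu},\rho/4)$ with $\nu\ne\mu$ then $d(y_{\nu},y_{\mu})<\rho/2$, contradicting separation, which gives (1); and if some $x\in\mathbf M$ lay in no ball $B(y_{\nu},\rho/2)$, then $\{y_{\nu}\}\cup\{x\}$ would still be $\rho/2$-separated, contradicting maximality, which gives (2).

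The substantive point is (3). Here I would use that a compact Riemannian manifold has bounded geometry: there are a radius $\rho_{0}>0$ and constants $0<c\le C<\infty$, depending only on $\mathbf M$, with
\[
c\, r^{\,n}\;\le\;\mathrm{vol}\,B(x,r)\;\le\;C\, r^{\,n},\qquad x\in\mathbf M,\ 0<r\le\rho_{0},
\]
where $n=\dim\mathbf M$. This is where compactness enters, and it follows, e.g., by comparing the Riemannian volume element with the Euclidean one in geodesic normal coordinates over a finite atlas, or from the Bishop--Gromov inequality together with a positive lower bound on the injectivity radius.

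Now fix $\rho\le\rho_{0}$ and suppose a point $x$ lies in $B(y_{\nu_{1}},\rho),\dots,B(y_{\nu_{k}},\rho)$ for distinct $\nu_{1},\dots,\nu_{k}$. Then each $y_{\nu_{i}}\in B(x,\rho)$, so the balls $B(y_{\nu_{i}},\rho/4)$, pairwise disjoint by (1), all lie in $B(x,5\rho/4)$. Comparing volumes,
\[
k\,c\,(\rho/4)^{\,n}\;\le\;\sum_{i=1}^{k}\mathrm{vol}\,B(y_{\nu_{i}},\rho/4)\;\le\;\mathrm{vol}\,B(x,5\rho/4)\;\le\;C\,(5\rho/4)^{\,n},
\]
whence $k\le 5^{\,n}C/c$. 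Thus the assertion holds with any natural number $N_{\mathbf M}\ge 5^{\,n}C/c$, and in particular $N_{\mathbf M}$ is independent of $\rho$.

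The only real obstacle is establishing the uniform two-sided volume bound; once it is available, the rest is the standard packing/covering dichotomy. Since the lemma is quoted from \cite{Pes00}, \cite{Pes04a}, one may simply cite those sources for the volume estimate — which is also precisely what forces the restriction to "sufficiently small $\rho$".
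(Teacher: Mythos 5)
Your argument is correct: the maximal $\rho/2$-separated set gives (1) and (2) directly, and the packing bound $k\le 5^{n}C/c$ from the uniform two-sided volume estimate $c\,r^{n}\le \mathrm{vol}\,B(x,r)\le C\,r^{n}$ (valid for small $r$ by compactness) yields (3) with $N_{\bf M}$ independent of $\rho$, exactly as needed. The paper itself offers no proof but cites \cite{Pes00}, \cite{Pes04a}, where the argument is this same standard separated-set and volume-counting construction, so your proposal matches the intended proof.
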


\begin{defn}
Any set of points $M_{\rho}=\{y_{\nu}\}$ which is as described in
Lemma \ref{covL} will be called a metric
$\rho$-lattice.\label{D1}
\end{defn}

To define Sobolev spaces,  we fix a  cover $B=\{B(y_{\nu}, r_{0})\}$ of $\bold{M}$ of finite
multiplicity $N_{\bf M}$ (see Lemma \ref{covL})
\begin{equation}
\bold{M}=\bigcup B(y_{\nu}, r_{0}),\label{cover}
\end{equation}
where $B(y_{\nu}, r_{0})$ is a  ball centered at $y_{\nu}\in \bold{M}$ of radius
$r_{0}\leq \rho_{\bold{M}},$ contained in a coordinate chart, and consider a fixed partition of unity
$\Psi=\{\psi_{\nu}\}$ subordinate to this cover. The Sobolev
spaces $H^{s}(\bold{M}), s\in \mathbf{R},$ are
introduced as the completion of $C^{\infty}(\bold{M})$ with respect
to the norm
\begin{equation}
\|f\|_{H^{s}(\bold{M})}=\left(\sum_{\nu}\|\psi_{\nu}f\|^{2}
_{H^{s}(B(y_{\nu}, r_{0}))}\right) ^{1/2}.\label{Sobnorm}
\end{equation}
Any two such norms are equivalent.  Note that  spaces $H^{s}(\bold{M}), s\in \mathbf{R},$ are domains of 
operators $A^{s/2}$ for all elliptic differential operators $A$ of order $2$. It implies, that for any $s\in \mathbf{R}$ there exist positive constants $a(s),\>b(s)$ (which depend on $\Psi$, $A$) such that
\begin{equation}\label{SnormE}
\|f\|_{H^{s}(\bold{M})}\leq a(s)\left(\|f\|^{2}_{L_{2}(\bold{M})}+\|A^{s/2}f\|_{L_{2}(\bold{M})}\right)^{1/2}\leq b(s)\|f\|_{H^{s}(\bold{M})}
\end{equation}
for all $f\in H^{s}(\bold{M}).$

 We are going to keep notations from the introduction. Since the
operator $\mathcal{L}$ is of order two, the dimension
$\mathcal{N}_{\omega}$ of the space ${\mathbf E}_{\omega}(\mathcal{L})$ is
given asymptotically by Weyl's formula 
\begin{equation}
\mathcal{N}_{\omega}({\bf M})\asymp C({\bf M})\omega^{n/2},\label{W}
\end{equation}
where $n=dim {\bf M}$.

 The next two theorems were proved in \cite{Pes00}, \cite{Pes04b},
for a Laplace-Beltrami operator in $L_{2}(\bf {M})$ on a Riemannian manifold $\bf {M}$ of
bounded geometry,  but their proofs go through for any
elliptic second-order differential operator in  $L_{2}({\bf M})$. 
In what follows the notation $n=dim\  {\bf M}$ is used.

\begin{thm}  There exist constants $\>\>C_{1}>0$ and
 $\>\>\>
\rho_{0}>0,$ such that for any  natural  $m>n/2$,
any $0<\rho<\rho_{0}$,  and any $\rho$-lattice
$M_{\rho}=\{x_{k}\}$,  the following inequality holds:
\begin{equation}
\label{Sob11}
\left(\sum _{x_{k}\in M_{\rho}}|f(x_{k})|^{2}\right)^{1/2}\leq
 C_{1}\rho^{-n/2}\|f\|_{H^{m}({\bf M})},
 \end{equation}
for all  $f\in H^{m}({\bf M}). $\label{T1}
\end{thm}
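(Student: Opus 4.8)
The plan is to reduce the discrete $\ell^2$-sum on the left of \eqref{Sob11} to a sum of local Sobolev norms and then invoke a Sobolev embedding theorem on coordinate charts together with the finite-multiplicity property from Lemma \ref{covL}. First I would fix a $\rho$-lattice $M_\rho=\{x_k\}$ with $\rho<\rho_0$ small, and cover $\bold M$ by the balls $B(y_\nu,r_0)$ from \eqref{cover} with the subordinate partition of unity $\Psi=\{\psi_\nu\}$. Since $m>n/2$, the classical Sobolev embedding $H^m\hookrightarrow C(B)$ holds on each coordinate ball $B=B(y_\nu,r_0)$, so pointwise values of $\psi_\mu f$ are controlled by $\|\psi_\mu f\|_{H^m(B(y_\mu,r_0))}$. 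The key quantitative point is to track how the constant in the embedding scales when one works on small balls of radius comparable to $\rho$: rescaling a ball of radius $\rho$ to a ball of radius $1$ in $\mathbb R^n$ shows that the sup norm on a $\rho$-ball is bounded by $C\rho^{-n/2}$ times the $H^m$-norm (the worst power $\rho^{-n/2}$ comes from the $L_2$ term; the higher-derivative terms carry better powers of $\rho$, so $\rho<\rho_0<1$ makes $\rho^{-n/2}$ dominate).

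Next I would carry out the counting. For each lattice point $x_k$, write $f(x_k)=\sum_\mu (\psi_\mu f)(x_k)$, where the sum ranges only over those $\mu$ with $x_k\in B(y_\mu,r_0)$; by the finite-multiplicity part (3) of Lemma \ref{covL} this is a bounded number $N_{\bold M}$ of terms, so $|f(x_k)|^2\le N_{\bold M}\sum_\mu |(\psi_\mu f)(x_k)|^2$. Summing over $x_k$ and exchanging the order of summation,
\begin{equation}
\sum_{x_k\in M_\rho}|f(x_k)|^2\le N_{\bold M}\sum_\mu\ \sum_{x_k\in B(y_\mu,r_0)}|(\psi_\mu f)(x_k)|^2 .
\end{equation}
Now for a fixed chart $B(y_\mu,r_0)$ the inner sum has at most $C r_0^n\rho^{-n}$ lattice points in it, because the balls $B(x_k,\rho/4)$ are pairwise disjoint by part (1) of Lemma \ref{covL} and each has volume comparable to $\rho^n$, so they fit a bounded number of times into the fixed ball $B(y_\mu,r_0+\rho/4)$. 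Applying the scaled Sobolev embedding on $B(y_\mu,r_0)$ to each term, $|(\psi_\mu f)(x_k)|^2\le C\rho^{-n}\|\psi_\mu f\|_{H^m(B(y_\mu,r_0))}^2$ — wait, here I must be careful: the right scaling is to cover $B(y_\mu,r_0)$ by $\asymp \rho^{-n}$ small sub-balls of radius $\asymp\rho$, apply the $\rho$-scaled embedding on each, and sum; the net effect is $\sum_{x_k\in B(y_\mu,r_0)}|(\psi_\mu f)(x_k)|^2\le C\rho^{-n}\|\psi_\mu f\|_{H^m(B(y_\mu,r_0))}^2$.

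Finally, combining these and using the definition \eqref{Sobnorm} of the Sobolev norm on $\bold M$,
\begin{equation}
\sum_{x_k\in M_\rho}|f(x_k)|^2\le C N_{\bold M}\rho^{-n}\sum_\mu\|\psi_\mu f\|_{H^m(B(y_\mu,r_0))}^2 = C N_{\bold M}\rho^{-n}\|f\|_{H^m(\bold M)}^2,
\end{equation}
which gives \eqref{Sob11} after taking square roots, with $C_1=(CN_{\bold M})^{1/2}$. I expect the main obstacle to be making the scaling argument in the Sobolev embedding genuinely uniform: one must choose $\rho_0$ small enough (and the charts in \eqref{cover} with controlled geometry, which is where bounded geometry or compactness enters) so that the embedding constants on all the $\rho$-sub-balls are bounded by a single constant independent of $\mu$, of the sub-ball, and of $\rho<\rho_0$, and so that the change of coordinates between overlapping charts does not spoil the $\rho^{-n/2}$ bound. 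Once that uniformity is in hand, the combinatorial counting via disjointness and finite multiplicity is routine.
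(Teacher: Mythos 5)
Your argument is correct and is essentially the same proof as the one behind the paper's statement: the paper does not reprove Theorem \ref{T1} but cites \cite{Pes00}, \cite{Pes04b}, where the inequality is obtained exactly as you do it, by localizing with the fixed partition of unity, applying a Sobolev embedding rescaled to balls of radius $\rho$ (the same estimate appears in this paper as (\ref{basicineq}) with $p=2$), and summing the local $H^{m}$-norms using the bounded multiplicity of the cover from Lemma \ref{covL}. The one place to tighten the write-up is the chart-level step: rather than counting $\asymp\rho^{-n}$ sub-balls, invoke directly that each lattice point carries its own ball $B(x_{k},\rho)$ and that these balls have overlap at most $N_{\bf M}$ (property (3) of Lemma \ref{covL}), so that $\sum_{k}\|\psi_{\mu}f\|^{2}_{H^{m}(B(x_{k},\rho))}\leq N_{\bf M}\|\psi_{\mu}f\|^{2}_{H^{m}}$ — which is the routine uniformity point you already identified.
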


\begin{thm} There exist constants
$C_{2}>0,$ and $\>\>\>
\rho_{0}>0,$ such that for any  natural $m>n/2$,
any $0<\rho<\rho_{0}$, and any $\rho$-lattice
$M_{\rho}=\{x_{k}\}$ the following  inequality holds

\begin{equation}\label{Sob22}
\|f\|_{H^{m}({\bf M})}\leq C_{2}\left\{\rho^{n/2}\left(\sum_{x_{k}\in M_{\rho}}
|f(x_{k})|^{2}\right)^{1/2}+\rho^{m}\|\mathcal{L}^{m/2}f\|_{L_{2}(\bf{M})}\right\}.
\end{equation}

\end{thm}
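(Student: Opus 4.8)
The plan is to establish the lower bound \eqref{Sob22} by combining a localized Sobolev embedding on each chart with the covering structure of the metric $\rho$-lattice, using a Poincar\'e-type inequality to control the oscillation of $f$ away from the sampled points. First I would use the equivalence of Sobolev norms \eqref{SnormE} to reduce the claim to bounding $\|f\|_{L_2(\mathbf M)}^2 + \|\mathcal L^{m/2}f\|_{L_2(\mathbf M)}^2$; since the second term already appears on the right-hand side of \eqref{Sob22} (with the factor $\rho^m$), the heart of the matter is to show
\begin{equation*}
\|f\|_{L_2(\mathbf M)} \leq C\left\{\rho^{n/2}\left(\sum_{x_k\in M_\rho}|f(x_k)|^2\right)^{1/2} + \rho^m\|\mathcal L^{m/2}f\|_{L_2(\mathbf M)}\right\}.
\end{equation*}

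Next I would work locally. By Lemma \ref{covL}, the balls $B(y_\nu,\rho/2)$ cover $\mathbf M$, and each contains exactly one lattice point (up to the bounded multiplicity $N_{\mathbf M}$); on each such ball I would write, for $y\in B(x_k,\rho/2)$,
\begin{equation*}
|f(y)|^2 \leq 2|f(x_k)|^2 + 2|f(y)-f(x_k)|^2,
\end{equation*}
integrate over the ball, and estimate the first term by $C\rho^n |f(x_k)|^2$ (volume of a $\rho$-ball) and the second term, after summing over $\nu$, by a Poincar\'e/Bramble--Hilbert-type inequality of the form $\|f - f(x_k)\|_{L_2(B(x_k,\rho/2))}^2 \leq C\rho^{2m}\|f\|_{H^m(B(x_k,\rho))}^2$ valid on a coordinate ball for $m>n/2$ (so that point evaluation makes sense). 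Summing over the cover and using that the multiplicity of $\{B(y_\nu,\rho)\}$ is at most $N_{\mathbf M}$ gives
\begin{equation*}
\|f\|_{L_2(\mathbf M)}^2 \leq C\rho^n\sum_{x_k}|f(x_k)|^2 + C\rho^{2m}\|f\|_{H^m(\mathbf M)}^2.
\end{equation*}

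Finally I would absorb the last term. Here the factor $\rho^{2m}$ is the key: applying \eqref{SnormE} again to replace $\|f\|_{H^m(\mathbf M)}^2$ by $\|f\|_{L_2(\mathbf M)}^2 + \|\mathcal L^{m/2}f\|_{L_2(\mathbf M)}^2$, the term $C\rho^{2m}\|f\|_{L_2(\mathbf M)}^2$ can be moved to the left-hand side provided $\rho$ is small enough that $C\rho^{2m}<1/2$ — this is where the hypothesis $\rho<\rho_0$ is used. What remains, $C\rho^{2m}\|\mathcal L^{m/2}f\|_{L_2(\mathbf M)}^2 \leq C\rho^{2m}\|\mathcal L^{m/2}f\|_{L_2(\mathbf M)}^2$, contributes the $\rho^m\|\mathcal L^{m/2}f\|_{L_2(\mathbf M)}$ term in \eqref{Sob22} after taking square roots.

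I expect the main obstacle to be the local Poincar\'e inequality with the correct power of $\rho$: one must show that on a coordinate ball of radius $\rho$ the constant in $\|f - f(x_k)\|_{L_2} \leq C\rho^m\|f\|_{H^m}$ can be taken uniform in $\nu$ and in $\rho$. This requires a scaling argument — pulling back to a fixed-size reference ball via the exponential map or coordinate chart, applying the standard Bramble--Hilbert lemma there, and tracking how each derivative and each power of the volume element rescales under $x\mapsto \rho x$; the uniformity across charts follows from the bounded geometry (equivalently, compactness of $\mathbf M$), which controls the metric coefficients and their derivatives on the fixed cover $B$ independently of $\nu$. Once this scaling bookkeeping is done cleanly, the rest is the routine summation and absorption described above.
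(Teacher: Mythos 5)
Your overall strategy (local estimates on the lattice balls, summation via the bounded multiplicity of Lemma \ref{covL}, then absorption) is the natural one --- note the paper itself gives no proof of this theorem, it only cites \cite{Pes00}, \cite{Pes04b} --- but two of your steps are genuinely wrong as stated. First, the opening reduction fails: on the right-hand side of \eqref{Sob22} the term $\|\mathcal L^{m/2}f\|_{L_2(\mathbf M)}$ carries the small factor $\rho^m$, so the $\mathcal L^{m/2}$-part of the $H^m$-norm is not ``already there''; since $\rho^m\ll 1$, $\|\mathcal L^{m/2}f\|_{L_2}$ is not controlled by $\rho^m\|\mathcal L^{m/2}f\|_{L_2}$, and your argument, even if completed, only yields the inequality with $\|f\|_{L_2(\mathbf M)}$ on the left. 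In fact no argument can do better, because with $\|f\|_{H^m(\mathbf M)}$ on the left and constants independent of $\rho$ the printed statement is false: take $f=u_j$ with $\lambda_j\to\infty$ and $\rho$ fixed and small. Then $\|u_j\|_{H^m(\mathbf M)}$ is comparable to $(1+\lambda_j)^{m/2}$ by \eqref{SnormE}, while $\rho^{n/2}\bigl(\sum_k|u_j(x_k)|^2\bigr)^{1/2}\le \rho^{n/2}|M_\rho|^{1/2}\|u_j\|_\infty\le C_s(1+\lambda_j)^{s/2}$ for any real $n/2<s<m$ (Sobolev embedding plus $|M_\rho|\le C\rho^{-n}$), which is of strictly lower order; letting $\lambda_j\to\infty$ forces $C_2\rho^m$ to stay bounded away from $0$, contradicting the claim for all small $\rho$. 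The inequality that is true, that is proved in the cited references, and that is the only thing used later in the paper (see \eqref{x} in the proof of Theorem \ref{PP2}) has $\|f\|_{L_2(\mathbf M)}$ on the left-hand side; that is the statement you should be proving.

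Second, the local Poincar\'e inequality you invoke, $\|f-f(x_k)\|_{L_2(B(x_k,\rho/2))}\le C\rho^{m}\|f\|_{H^m(B(x_k,\rho))}$, is false for $m\ge 2$: subtracting the single value $f(x_k)$ annihilates only constants, so the Bramble--Hilbert/scaling argument gives $\|f-f(x_k)\|_{L_2(B(x_k,\rho))}\le C\sum_{j=1}^{m}\rho^{j}|f|_{H^j(B(x_k,\rho))}$ and nothing better (test a linear $f$ in a chart: the left side is of order $\rho^{1+n/2}$, your right side of order $\rho^{m+n/2}$). This is reparable: keep the full sum of seminorms, square, and sum over the lattice using the multiplicity bound to get $\|f\|_{L_2(\mathbf M)}^2\le C\rho^{n}\sum_k|f(x_k)|^2+C\sum_{j=1}^m\rho^{2j}|f|^2_{H^j(\mathbf M)}$; then remove the intermediate seminorms by interpolation, $|f|_{H^j(\mathbf M)}\le C\|f\|_{L_2(\mathbf M)}^{1-j/m}\|f\|_{H^m(\mathbf M)}^{j/m}$ (equivalently $\|\mathcal L^{j/2}f\|_{L_2}\le\|f\|_{L_2}^{1-j/m}\|\mathcal L^{m/2}f\|_{L_2}^{j/m}$), together with Young's inequality, so that $\rho^{j}|f|_{H^j(\mathbf M)}\le\varepsilon\|f\|_{L_2(\mathbf M)}+C_\varepsilon\rho^{m}\|f\|_{H^m(\mathbf M)}$. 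After invoking \eqref{SnormE} and absorbing $\varepsilon\|f\|_{L_2}$ and $C\rho^{m}\|f\|_{L_2}$ into the left-hand side for $\rho<\rho_0$ small (this is where your final absorption step belongs, and there it is fine), you obtain exactly the $L_2$-version $\|f\|_{L_2(\mathbf M)}\le C\{\rho^{n/2}(\sum_k|f(x_k)|^2)^{1/2}+\rho^{m}\|\mathcal L^{m/2}f\|_{L_2(\mathbf M)}\}$, which is what the rest of the paper requires.
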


 As one can easily verify the norm of $\mathcal{L}$ on the subspace  ${\mathbf E}_{\omega}(\mathcal{L})$ (the span of  eigenfunctions whose eigenvalues $\leq \omega$) is exactly $\omega$.  In particular one has the following Bernstein-type inequality
 \begin{equation}
 \label{BI}
 \|\mathcal{L}^{s}f\|_{L_{2}({\bf M})}\leq \omega^{s}\|f\|_{L_{2}({\bf M})},\>\>\>s\in {\bf R}_{+},
 \end{equation}
 for all $f\in {\mathbf E}_{\omega}(\mathcal{L})$.
 This fact and the previous two theorems imply the
following Plancherel-Polya-type inequalities.  Such
inequalities are also known as Marcinkewicz-Zygmund inequalities.

\bigskip

\begin{thm}
\label{PP2}
Set $m_{0}=\left[\frac{n}{2}\right]+1$. If $C_{1},\>\>C_{2}$ are the same as above, $a(m_{0})$ is from (\ref{SnormE}),  and $c_{0}=\left(\frac{1}{2}C_{2}^{-1}\right)^{1/m_{0}}$
then  for any $\omega>0$, and for
every metric $\rho$-lattice $M_{\rho}=\{x_{k}\}$ with $\rho=
c_{0}\omega^{-1/2}$, the following Plancherel-Polya inequalities
hold:

\begin{equation}
C_{1}^{-1}a(m_{0})^{-1}(1+\omega)^{-m_{0}/2}\left(\sum_{k}|f(x_{k})|^{2}\right)^{1/2}\leq\rho^{-n/2}\|f\|_{L_{2}({\bf M})}
\leq 
$$
$$
(2C_{2})\left(\sum_{k} |f(x_{k})|^{2}\right)^{1/2}, \label{completePlPo100}
\end{equation}
for all $f\in {\mathbf E}_{\omega}(\mathcal{L})$ and $n=\dim \  {\bf M}$. 
\label{completePlPo2}

\end{thm}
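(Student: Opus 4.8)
The plan is to obtain the two inequalities in (\ref{completePlPo100}) by specializing Theorems \ref{T1} and \ref{completePlPo2}'s predecessor (the Plancherel--Polya ingredients) to functions $f\in \mathbf{E}_{\omega}(\mathcal{L})$ and to the particular choice $\rho = c_{0}\omega^{-1/2}$, then cleaning up the resulting constants. The upper bound for the discrete sum comes from Theorem \ref{T1}: for $m=m_{0}=[n/2]+1>n/2$ and any $\rho$-lattice we have $\left(\sum_{k}|f(x_{k})|^{2}\right)^{1/2}\le C_{1}\rho^{-n/2}\|f\|_{H^{m_{0}}(\bf M)}$. To convert the $H^{m_{0}}$ norm into an $L_{2}$ norm on the band-limited space, I would invoke (\ref{SnormE}) with $A=\mathcal L$ and $s=m_{0}$, giving $\|f\|_{H^{m_{0}}(\bf M)}\le a(m_{0})\left(\|f\|_{L_2}^2+\|\mathcal L^{m_0/2}f\|_{L_2}^2\right)^{1/2}$, and then the Bernstein inequality (\ref{BI}) with $s=m_0/2$, which bounds $\|\mathcal L^{m_0/2}f\|_{L_2}\le \omega^{m_0/2}\|f\|_{L_2}$. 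Hence $\|f\|_{H^{m_0}(\bf M)}\le a(m_0)(1+\omega^{m_0})^{1/2}\|f\|_{L_2}\le a(m_0)(1+\omega)^{m_0/2}\|f\|_{L_2}$, and rearranging yields exactly the left inequality of (\ref{completePlPo100}).

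For the right inequality I would start from Theorem \ref{completePlPo2}'s companion, i.e. the lower Plancherel--Polya bound encoded in (\ref{Sob22}): with $m=m_{0}$ we have $\|f\|_{H^{m_0}(\bf M)}\le C_2\left\{\rho^{n/2}\left(\sum_k|f(x_k)|^2\right)^{1/2}+\rho^{m_0}\|\mathcal L^{m_0/2}f\|_{L_2}\right\}$. Since $\|f\|_{L_2}\le \|f\|_{H^{m_0}(\bf M)}$ trivially (or via (\ref{SnormE})), and since by (\ref{BI}) $\|\mathcal L^{m_0/2}f\|_{L_2}\le \omega^{m_0/2}\|f\|_{L_2}$, substituting $\rho=c_0\omega^{-1/2}$ gives $\rho^{m_0}\omega^{m_0/2}=c_0^{m_0}$, and by the definition $c_0=\left(\tfrac12 C_2^{-1}\right)^{1/m_0}$ we get $\rho^{m_0}\omega^{m_0/2}=\tfrac12 C_2^{-1}$. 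Therefore $\|f\|_{L_2}\le C_2\rho^{n/2}\left(\sum_k|f(x_k)|^2\right)^{1/2}+\tfrac12\|f\|_{L_2}$, so that $\tfrac12\|f\|_{L_2}\le C_2\rho^{n/2}\left(\sum_k|f(x_k)|^2\right)^{1/2}$, which after dividing by $\tfrac12\rho^{n/2}$ is precisely $\rho^{-n/2}\|f\|_{L_2}\le 2C_2\left(\sum_k|f(x_k)|^2\right)^{1/2}$, the right-hand inequality of (\ref{completePlPo100}).

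The only genuinely delicate point — and the one I would present most carefully — is the absorption step: we need $0<c_0\omega^{-1/2}<\rho_0$ so that Theorems \ref{T1} and \ref{completePlPo2}'s predecessor actually apply, which forces a lower bound on $\omega$; for small $\omega$ one should either note that $\rho_0$ can be taken to depend on $\omega$ through a harmless rescaling, or restrict attention to $\omega$ large enough that $c_0\omega^{-1/2}<\rho_0$ (this is the regime of interest and is what the statement tacitly assumes). Also I would double-check the elementary inequality $1+\omega^{m_0}\le (1+\omega)^{m_0}$ for $m_0\ge 1$, $\omega>0$, which is what lets me replace the awkward factor $(1+\omega^{m_0})^{1/2}$ by the cleaner $(1+\omega)^{m_0/2}$ appearing in the theorem. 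Everything else is bookkeeping of the constants $C_1$, $C_2$, $a(m_0)$ exactly as they are threaded through the statement.
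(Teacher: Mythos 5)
Your proposal is correct and follows essentially the same route as the paper: the left inequality comes from Theorem \ref{T1} combined with the norm equivalence (\ref{SnormE}) and the Bernstein inequality (\ref{BI}), and the right inequality comes from (\ref{Sob22}) with $m=m_{0}$, Bernstein, and the choice $\rho=c_{0}\omega^{-1/2}$ that makes the coefficient of $\|f\|_{L_{2}}$ equal to $\tfrac12$ so it can be absorbed. Your added remark about needing $c_{0}\omega^{-1/2}<\rho_{0}$ (i.e.\ $\omega$ not too small) is a legitimate point of care that the paper passes over silently, but it does not change the argument.
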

\begin{proof}

Since $\mathcal{L}$ is an  elliptic second-order differential operator on a compact manifold which is self-adjoint and positive definite in $L_{2}(\bf{M})$
 the norm on the Sobolev space $H^{m_{0}}(\bf{M})$ is equivalent to the norm $\|f\|_{L_{2}(\bold{M})}+\|\mathcal{L}^{m_{0}/2}f\|_{L_{2}(\bold{M})}$. Thus, the inequality (\ref{Sob11}) implies
$$
\left(\sum _{x_{k}\in {\bf M}_{\rho}}|f(x_{k})|^{2}\right)^{1/2}\leq
 C_{1}a(m_{0})\rho^{-n/2}\left( \|f\|_{L_{2}(\bold{M})}+\|\mathcal{L}^{m_{0}/2}f\|_{L_{2}(\bold{M})} \right).
 $$

The  Bernstein inequality shows that   for all $f\in {\mathbf E}_{\omega}(\mathcal{L})$ and all $\omega\geq 0$
  $$
  \|f\|_{L_{2}(\bold{M})}+\|\mathcal{L}^{m_{0}/2}f\|_{L_{2}(\bold{M})}\leq (1+\omega)^{m_{0}/2}\|f\|_{L_{2}(\bold{M})}.
 $$
 Thus we proved the inequality 
 \begin{equation}
C_{1}^{-1}a(m_{0})^{-1}(1+\omega)^{-m_{0}/2}\left(\sum _{x_{k}\in M_{\rho}}|f(x_{k})|^{2}\right)^{1/2}\leq
 \rho^{-n/2}\|f\|_{L_{2}(\bold{M})}, \>\>\\\ f\in {\mathbf E}_{\omega}(\mathcal{L}).
 \end{equation}

 To prove the opposite inequality we start with inequality (\ref{Sob22}) where $    m_{0}=\left[\frac{n}{2}\right]+1           $. 
Applying the Bernstein inequality  (\ref{BI}) we obtain
\begin{equation}
\label{x}
\|f\|_{L_{2}(\bold{M})}\leq C_{2}\rho^{n/2}\left(\sum_{x_{k}\in M_{\rho}}
|f(x_{k})|^{2}\right)^{1/2}+C_{2}\rho^{m_{0}}\omega^{m_{0}/2}\|f\|_{L_{2}(\bold{M})},
\end{equation}
where  $f\in {\mathbf E}_{\omega}(\mathcal{L})$. Now we fix the following value for $\rho$
$$
\rho=\left(\frac{1}{2}C_{2}^{-1}\right)^{1/m_{0}}\omega^{-1/2}=c_{0}\omega ^{-1/2},\>\>\>c_{0}=\left(\frac{1}{2}C_{2}^{-1}\right)^{1/m_{0}}.
$$
With such $\rho$ the factor in the front of the last term in (\ref{x}) is exactly $1/2$. Thus, this term can be moved to the left side of the formula (\ref{x}) to obtain
\begin{equation}
\label{xx}
\frac{1}{2}\|f\|_{L_{2}(\bold{M})}\leq C_{2}\rho^{n/2}\left(\sum_{x_{k}\in M_{\rho}}
|f(x_{k})|^{2}\right)^{1/2}.
\end{equation}
In other words, we obtain the inequality
$$
\rho^{-n/2}\|f\|_{L_{2}(\bold{M})}\leq 2C_{2}\left(\sum_{x_{k}\in  M_{\rho}}
|f(x_{k})|^{2}\right)^{1/2}.
$$

The theorem is proved.

\end{proof}

It is interesting to note that our $\rho$-lattices (appearing in the
previous Theorems) always produce sampling sets with essentially 
optimal number of sampling points. In other words, the number of points in a sampling set for  ${\mathbf E}_{\omega}(\mathcal{L})$ is "almost" the same as the dimension of the space   ${\mathbf E}_{\omega}(\mathcal{L})$ which is given by the Weyl's formula (\ref{W}).

\begin{thm}
 If the  constant $c_{0}>0$ is the same
as above, then  for any $\omega>0$ and $\rho=c_{0}\omega^{-1/2}$,
there exist positive $a_{1}, \>a_{2}$ such
that the number of points in  any $\rho$-lattice $M_{\rho}$
satisfies the following inequalities
\begin{equation}
a_{1}\omega^{n/2}\leq |M_{\rho}|\leq
a_{2}\omega^{n/2};\label{rate}
\end{equation}

\end{thm}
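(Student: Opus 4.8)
The plan is to estimate the cardinality $|M_\rho|$ by comparing the volume of $\mathbf{M}$ with the volumes of the balls in the packing and covering provided by Lemma \ref{covL}. The key observation is that on a compact Riemannian manifold there exist constants $0<c\le C$ and $r_1>0$ such that $c\, r^n \le \mathrm{vol}\, B(x,r) \le C\, r^n$ for all $x\in\mathbf{M}$ and all $0<r<r_1$; this follows from compactness together with the fact that in normal coordinates the Riemannian measure is comparable to Lebesgue measure. We will use this double bound with $r = \rho/4$ and $r = \rho/2$, where $\rho = c_0\omega^{-1/2}$, so that $\rho$ is as small as we like by taking $\omega$ large (and for the finitely many small $\omega$ the statement is trivially adjusted by enlarging $a_2$ and shrinking $a_1$).

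First I would prove the lower bound on $|M_\rho|$. Since by property (2) of Lemma \ref{covL} the balls $B(y_\nu,\rho/2)$ cover $\mathbf{M}$, we have
\begin{equation}
\mathrm{vol}\,\mathbf{M} \le \sum_{y_\nu\in M_\rho}\mathrm{vol}\,B(y_\nu,\rho/2) \le |M_\rho|\cdot C (\rho/2)^n,
\end{equation}
which gives $|M_\rho| \ge \mathrm{vol}(\mathbf{M})\, C^{-1} 2^n \rho^{-n} = a_1' \rho^{-n}$. Substituting $\rho = c_0\omega^{-1/2}$ turns $\rho^{-n}$ into $c_0^{-n}\omega^{n/2}$, yielding the lower bound in (\ref{rate}) with $a_1 = \mathrm{vol}(\mathbf{M})\,C^{-1}2^n c_0^{-n}$.

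Next I would prove the upper bound. By property (1) the balls $B(y_\nu,\rho/4)$ are pairwise disjoint, so
\begin{equation}
|M_\rho|\cdot c (\rho/4)^n \le \sum_{y_\nu\in M_\rho}\mathrm{vol}\,B(y_\nu,\rho/4) \le \mathrm{vol}\,\mathbf{M},
\end{equation}
hence $|M_\rho| \le \mathrm{vol}(\mathbf{M})\, c^{-1} 4^n \rho^{-n}$. Again substituting $\rho = c_0\omega^{-1/2}$ gives the upper bound with $a_2 = \mathrm{vol}(\mathbf{M})\, c^{-1} 4^n c_0^{-n}$, and in particular $|M_\rho|$ is finite, so $M_\rho$ really is a finite set. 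The only mildly delicate point — the "main obstacle," though it is standard — is justifying the two-sided volume comparison $c r^n \le \mathrm{vol}\,B(x,r)\le C r^n$ uniformly in $x$ for small $r$; on a compact manifold this is immediate from a finite atlas of normal-coordinate charts and continuity/compactness, and for the large (bounded) range of $\omega$ not covered by the "small $\rho$" regime one simply absorbs the finitely many cases into the constants. Everything else is bookkeeping with the constant $c_0$ from Theorem \ref{PP2}.
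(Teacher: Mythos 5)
Your proposal is correct and follows essentially the same route as the paper: bound $|M_\rho|$ above and below by comparing $\mathrm{Vol}(\mathbf{M})$ with the packing balls $B(y_\nu,\rho/4)$ and covering balls $B(y_\nu,\rho/2)$, use the uniform two-sided volume estimate $c_1\rho^{n}\leq \mathrm{Vol}\,B(x,\rho)\leq c_2\rho^{n}$ valid for small $\rho$ on a compact manifold, and substitute $\rho=c_0\omega^{-1/2}$. Your explicit treatment of the finitely many "large $\rho$" cases by adjusting the constants is a minor added detail, not a different argument.
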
\label{FT}

\begin{proof}
According to the definition of a
lattice $M_{\rho}$ we have
$$
|M_{\rho}|\inf_{x\in M}Vol(B(x,\rho/4))\leq Vol({\bf M})\leq |M_{\rho}|\sup_{x\in
M}Vol(B(x,\rho/2))
$$
or
$$
\frac{Vol(\bold M)}{\sup_{x\in \bold M}Vol(B(x,\rho/2))}\leq |M_{\rho}|\leq
\frac{Vol(\bold M)}{\inf_{x\in \bold M}Vol\left(B(x,\rho/4)\right)}.
$$

Since for  certain $c_{1}(\bold M), c_{2}(\bold M)$, all $x\in \bold M$ and all
sufficiently small $\rho>0$ one has a double inequality
$$
c_{1}(\bold M)\rho^{n}\leq  Vol(B(x,\rho))\leq c_{2}(\bold M)\rho^{n},
$$
and since $\rho=c_{0}\omega^{-1/2}, $ we obtain the inequalities (\ref{rate}) for certain $a_{1}=a_{1}(\bold M),\>\>a_{2}=a_{2}(\bold M).$

\end{proof}

\section{Cubature formulas on manifolds which are exact on band-limited functions}

\bigskip

Theorem \ref{PP2}
 shows that  if $x_{k}$ is in a $\rho$ lattice $M_{\rho}$ and  $\vartheta_{k}$ is
the orthogonal projection of the Dirac measure $\delta_{x_{k}}$ on
the space ${\mathbf E}_{\omega}(\mathcal{L})$ (in a Hilbert space
$H^{-n/2-\varepsilon}({\bf M}), \>\>\varepsilon >0)$ then there exist
constants $c_{1}=c_{1}({\bf M},\mathcal{L}, \omega)>0,\>\>
c_{2}=c_{2}({\bf M},\mathcal{L})>0,$ such that the following frame
inequality holds for all $f\in {\mathbf E}_{\omega}(\mathcal{L})$
\begin{equation}
c_{1}\left(\sum_{k}\left|\left<f,\vartheta_{k}\right>\right|^{2}\right)^{1/2}
\leq \rho^{-n/2}\|f\|_{L_{2}({\bf M})} \leq
c_{2}\left(\sum_{k}\left|\left<f,\vartheta_{k}\right>\right|^{2}\right)^{1/2},
\end{equation}
where 
$$
\left<f,\vartheta_{k}\right>=f(x_{k}),\>\>\>f\in {\mathbf E}_{\omega}(\mathcal{L}).
$$

From here by using the
classical ideas of Duffin and Schaeffer about dual frames
\cite{DS} we obtain the following reconstruction formula.
\begin{thm}
If $M_{\rho}$ is a $\rho$-lattice in Theorem  \ref{PP2} with $\rho=c_{0}\omega^{-1/2}$  then there exists a frame
$\{\Theta_{j}\}$ in the space ${\mathbf E}_{\omega}(\mathcal{L})$ such that
the following reconstruction formula holds for all functions in ${\mathbf E}_{\omega}(\mathcal{L})$
\begin{equation}
f=\sum_{x_{k}\in M_{\rho}}f(x_{k})\Theta_{k}.\label{exactrecon}
\end{equation}\label{Frecon}
\label{4.2}
\end{thm}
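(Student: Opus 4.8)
The plan is to invoke the classical Duffin--Schaeffer dual-frame construction, which here is especially transparent because ${\mathbf E}_{\omega}(\mathcal{L})$ is finite-dimensional. The starting point is the frame inequality displayed just before the statement: for every $f\in {\mathbf E}_{\omega}(\mathcal{L})$,
\begin{equation}
c_{1}\left(\sum_{k}|f(x_{k})|^{2}\right)^{1/2}\leq \rho^{-n/2}\|f\|_{L_{2}({\bf M})}\leq c_{2}\left(\sum_{k}|f(x_{k})|^{2}\right)^{1/2},
\end{equation}
where $\left<f,\vartheta_{k}\right>=f(x_{k})$ and $\vartheta_{k}\in{\mathbf E}_{\omega}(\mathcal{L})$ is the $L_{2}$-orthogonal projection of the Dirac measure $\delta_{x_{k}}$ onto ${\mathbf E}_{\omega}(\mathcal{L})$. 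Equivalently, $\{\vartheta_{k}\}$ is a frame in ${\mathbf E}_{\omega}(\mathcal{L})$ with frame bounds $c_{2}^{-2}\rho^{-n}$ and $c_{1}^{-2}\rho^{-n}$ (both finite and positive, since $M_{\rho}$ is a finite set by compactness of ${\bf M}$).

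First I would introduce the frame operator $S\colon {\mathbf E}_{\omega}(\mathcal{L})\to{\mathbf E}_{\omega}(\mathcal{L})$ given by
\begin{equation}
Sf=\sum_{k}\left<f,\vartheta_{k}\right>\vartheta_{k}=\sum_{x_{k}\in M_{\rho}} f(x_{k})\,\vartheta_{k}.
\end{equation}
It is self-adjoint and, by the lower frame bound, positive definite, hence invertible on the finite-dimensional space ${\mathbf E}_{\omega}(\mathcal{L})$, with $c_{2}^{-2}\rho^{-n}\,I\leq S\leq c_{1}^{-2}\rho^{-n}\,I$. Then I would set $\Theta_{k}=S^{-1}\vartheta_{k}$; these vectors again lie in ${\mathbf E}_{\omega}(\mathcal{L})$ and constitute the canonical dual frame, with frame bounds $c_{1}^{2}\rho^{n}$ and $c_{2}^{2}\rho^{n}$. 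Applying $S^{-1}$ to the identity $Sf=\sum_{k} f(x_{k})\vartheta_{k}$ and using linearity yields, for every $f\in{\mathbf E}_{\omega}(\mathcal{L})$,
\begin{equation}
f=S^{-1}Sf=\sum_{k} f(x_{k})\,S^{-1}\vartheta_{k}=\sum_{x_{k}\in M_{\rho}} f(x_{k})\,\Theta_{k},
\end{equation}
which is the asserted reconstruction formula.

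There is essentially no analytic obstacle here: the only substantive input --- that the sampling functionals $f\mapsto f(x_{k})$ satisfy a two-sided frame estimate on ${\mathbf E}_{\omega}(\mathcal{L})$ --- is exactly Theorem \ref{PP2}, and granted this the passage to $\{\Theta_{k}\}$ is elementary linear algebra (inversion of a positive definite operator on a finite-dimensional Hilbert space). The one point deserving a line of justification is the identity $\left<f,\vartheta_{k}\right>=f(x_{k})$ for $f\in{\mathbf E}_{\omega}(\mathcal{L})$: since $\vartheta_{k}$ is the orthogonal projection of $\delta_{x_{k}}$ onto ${\mathbf E}_{\omega}(\mathcal{L})$ and orthogonal projections are self-adjoint, $\left<f,\vartheta_{k}\right>=\left<f,\delta_{x_{k}}\right>=f(x_{k})$ whenever $f$ belongs to the range of the projection. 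Finally, the frame bounds exhibited above, together with $\rho=c_{0}\omega^{-1/2}$, give explicit control of $\|\Theta_{k}\|_{L_{2}({\bf M})}$ and of the stability of the expansion in terms of $\omega$.
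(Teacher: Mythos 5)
Your argument is correct and is essentially the paper's own: the paper simply cites the classical Duffin--Schaeffer dual-frame idea after stating the frame inequality for the $\vartheta_{k}$, and your frame-operator construction $\Theta_{k}=S^{-1}\vartheta_{k}$ is exactly the standard way to carry that out on the finite-dimensional space ${\mathbf E}_{\omega}(\mathcal{L})$. The only cosmetic point is that $\delta_{x_{k}}\notin L_{2}({\bf M})$, so $\vartheta_{k}$ should be described, as in the paper, as the projection of $\delta_{x_{k}}$ taken in $H^{-n/2-\varepsilon}({\bf M})$ (equivalently, the reproducing kernel of ${\mathbf E}_{\omega}(\mathcal{L})$ at $x_{k}$, i.e. the $L_{2}$-Riesz representer of the evaluation functional), which is precisely what your identity $\left<f,\vartheta_{k}\right>=f(x_{k})$ uses.
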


This formula implies that for any linear functional $F$ on the space ${\mathbf E}_{\omega}(\mathcal{L})$ one has
$$
F(f)=\sum_{x_{k}\in M_{\rho}}f(x_{k})F(\Theta_{k}), \>\>\>f\in {\mathbf E}_{\omega}(\mathcal{L}).
$$
In particular, we have the following exact cubature formula.
\begin{thm}
If $M_{\rho}$ is a $\rho$-lattice in Theorem  \ref{PP2} with $\rho=c_{0}\omega^{-1/2}$  and 
$$
\nu_{k}=\int_{\bold M}\Theta_{k},
$$
then for all $f\in  {\mathbf E}_{\omega}(\mathcal{L})$ the following holds 
\begin{equation}\label{cub-1}
\int_{\bold M}f=\sum_{x_{k}\in M_{\rho}}f(x_{k})\nu_{k}, \>\>\>f\in {\mathbf E}_{\omega}(\mathcal{L}).
\end{equation}
\end{thm}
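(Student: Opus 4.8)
The plan is to read off the cubature formula as a special case of the reconstruction formula in Theorem~\ref{Frecon}. The key observation is that integration against the Riemannian measure, $f\mapsto\int_{\bold M}f\,dx$, is a linear functional on the finite-dimensional space ${\mathbf E}_{\omega}(\mathcal{L})$ (it is bounded there since the space is finite-dimensional and all norms on it are equivalent; alternatively it is bounded on all of $L_2(\bold M)$ because $\bold M$ has finite volume). So the general principle stated just before the theorem applies with $F(f)=\int_{\bold M}f\,dx$.

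First I would take an arbitrary $f\in{\mathbf E}_{\omega}(\mathcal{L})$ and apply Theorem~\ref{Frecon}: since $\rho=c_0\omega^{-1/2}$, we have the exact expansion $f=\sum_{x_k\in M_\rho}f(x_k)\Theta_k$, where the sum is finite (by Theorem~\ref{FT} there are at most $a_2\omega^{n/2}$ points in $M_\rho$, and $\{\Theta_k\}\subset{\mathbf E}_{\omega}(\mathcal{L})$). Next I would integrate both sides over $\bold M$. Because the sum is finite, interchanging $\int_{\bold M}$ with $\sum_{x_k}$ is trivially justified, giving
\begin{equation*}
\int_{\bold M}f\,dx=\int_{\bold M}\Bigl(\sum_{x_k\in M_\rho}f(x_k)\Theta_k\Bigr)dx=\sum_{x_k\in M_\rho}f(x_k)\int_{\bold M}\Theta_k\,dx=\sum_{x_k\in M_\rho}f(x_k)\nu_k,
\end{equation*}
which is exactly \eqref{cub-1} with the stated weights $\nu_k=\int_{\bold M}\Theta_k$.

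There is essentially no obstacle here: the entire content has already been established in Theorem~\ref{Frecon}, and the present statement is just the instantiation of the remark ``$F(f)=\sum f(x_k)F(\Theta_k)$'' at the particular functional $F=\int_{\bold M}\,\cdot\,dx$. The only points worth a sentence are (i) noting that the expansion sum is finite so that linearity of the integral applies term by term, and (ï) observing that the weights $\nu_k$ depend only on $\omega$ and the chosen lattice, not on $f$, so the formula is a genuine cubature rule valid uniformly on ${\mathbf E}_{\omega}(\mathcal{L})$. Accordingly the proof will be two or three lines long.
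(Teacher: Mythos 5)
Your proposal is correct and matches the paper's own reasoning: the paper also obtains this theorem by applying the linear functional $F(f)=\int_{\bold M}f\,dx$ to the exact reconstruction formula $f=\sum_{x_k\in M_\rho}f(x_k)\Theta_k$ of Theorem \ref{Frecon}, yielding the weights $\nu_k=\int_{\bold M}\Theta_k$. Your extra remarks on finiteness of the sum and $f$-independence of the weights are harmless elaborations of the same argument.
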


Thus, we have a cubature formula which is exact on the space ${\mathbf E}_{\omega}(\mathcal{L})$. Now,we are going to consider general functions $f\in L_{2}(\bold M)$.  Let $f_{\omega}$ be orthogonal projection of $f$ onto space ${\mathbf E}_{\omega}(\mathcal{L})$. As it was shown in \cite{Pes09} there exts a constant $C_{k,m}$ that the following estimate holds for all $f\in L_{2}(\bold M)$
\begin{equation}\label{mod-1}
\|f-f_{\omega}\|_{L_{2}(\bold M)}\leq \frac{C_{
k, m}}{\omega^{k}}\Omega_{m-k}\left(\mathcal L^{k}f, 1/\omega\right),\>\>\>k, m\in \bold N.
\end{equation}
Here the modulus of continuity is defined as
\begin{equation}
\Omega_{r}(g,s)=\sup_{|\tau|\leq
s}\left\|\Delta^{r}_{\tau}g\right\|,\>\>\>g\in L_{2}(\bold M), \>\>\>r\in \bold N,\label{ModCont}
\end{equation}
where
\begin{equation}
\Delta^{r}_{\tau}g=(-1)^{r+1}\sum^{r}_{j=0}(-1)^{j-1}C^{j}_{r}e^{j\tau(i\mathcal L)}g,\>\>\>
\tau\in \mathbf{R}, \>\>\>r\in \mathbf N.\label{Dif}
\end{equation}
Thus, by combining (\ref{cub-1}) and (\ref{mod-1}) we obtain the following theorem.

\begin{thm} There exists a $c_{0}=c_{0}(\bold M,\mathcal L)$  and for
   any $0\leq k\leq m, k,m\in \mathbb{N},$ there exists a constant $C_{k,m}>0$ such
   that if $M_{\rho}=\{x_{k}\}$ is a $\rho$-lattice  with
   $0<\rho\leq c_{0}\omega^{-1}$ then for the same weights $\{\nu_{j}\}$ as in (\ref{cub-1})
\begin{equation}
\left |\int_{\bold M}f-
\sum_{x_{j}}f_{\omega}(x_{j})\nu_{j}\right |\leq\frac{C_{
k, m}}{\omega^{k}}\Omega_{m-k}\left(\mathcal{L}^{k}f, 1/\omega\right),
\end{equation}
where $f_{\omega}$ is the orthogonal projection of $f\in L_{2}(\bold M)$
onto ${\mathbf E}_{\omega}(\mathcal{L})$.
\end{thm}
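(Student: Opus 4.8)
The plan is to combine the exact cubature rule (\ref{cub-1}) with the quantitative best-approximation estimate (\ref{mod-1}). The starting point is that $f_\omega\in\mathbf{E}_\omega(\mathcal{L})$, so the cubature formula is \emph{exact} on it: $\int_{\bold M}f_\omega=\sum_{x_j}f_\omega(x_j)\nu_j$. Consequently
$$
\int_{\bold M}f-\sum_{x_j}f_\omega(x_j)\nu_j=\int_{\bold M}f-\int_{\bold M}f_\omega=\int_{\bold M}(f-f_\omega),
$$
and the whole problem reduces to estimating the integral of $f-f_\omega$ over $\bold M$.

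For that, I would simply pass from $L_1$ to $L_2$ by the Cauchy--Schwarz inequality,
$$
\left|\int_{\bold M}(f-f_\omega)\right|\le\int_{\bold M}|f-f_\omega|\le (Vol(\bold M))^{1/2}\,\|f-f_\omega\|_{L_2(\bold M)},
$$
and then invoke (\ref{mod-1}) with the prescribed $k,m$ to bound $\|f-f_\omega\|_{L_2(\bold M)}$ by $\omega^{-k}C_{k,m}\,\Omega_{m-k}(\mathcal{L}^{k}f,1/\omega)$. Absorbing the fixed factor $(Vol(\bold M))^{1/2}$ into a new constant $C_{k,m}$ yields precisely the asserted inequality.

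The only point deserving attention --- and the closest thing to an obstacle --- is the matching of hypotheses: the exact formula (\ref{cub-1}) was obtained for the single lattice spacing $\rho=c_0\omega^{-1/2}$, whereas here an arbitrary $\rho$-lattice with $0<\rho\le c_0\omega^{-1}$ is allowed. I would therefore check that the chain Theorem \ref{PP2} $\Rightarrow$ Theorem \ref{Frecon} $\Rightarrow$ (\ref{cub-1}) still goes through for such a finer lattice. The lower Plancherel--Polya bound (\ref{Sob11}) holds for every $\rho$-lattice; in the upper bound, taken from (\ref{Sob22}) with $m=m_0$, the Bernstein remainder $C_2\rho^{m_0}\omega^{m_0/2}$ is at most $C_2c_0^{m_0}\omega^{-m_0/2}=\tfrac12\omega^{-m_0/2}\le\tfrac12$ once $\omega\ge1$, so it is again absorbed into the left-hand side. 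This produces a dual frame $\{\Theta_j\}$ and weights $\nu_j=\int_{\bold M}\Theta_j$ attached to the given lattice --- ``the same weights as in (\ref{cub-1})'' in the intended sense --- after which the two displays above finish the argument. No genuinely new difficulty arises: the substance is entirely contained in Theorem \ref{PP2} and the estimate (\ref{mod-1}), both of which may be taken as given.
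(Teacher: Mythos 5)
Your proposal is correct and follows exactly the route the paper intends: exactness of (\ref{cub-1}) on $f_{\omega}\in{\mathbf E}_{\omega}(\mathcal{L})$ reduces the error to $\bigl|\int_{\bold M}(f-f_{\omega})\bigr|$, which Cauchy--Schwarz and (\ref{mod-1}) bound by $C_{k,m}\,\omega^{-k}\,\Omega_{m-k}(\mathcal{L}^{k}f,1/\omega)$ after absorbing $(Vol(\bold M))^{1/2}$ into the constant. Your extra check that the Plancherel--Polya absorption, and hence the dual-frame weights $\nu_{j}$, still work for the finer lattices $0<\rho\leq c_{0}\omega^{-1}$ is a point the paper passes over silently, and it is handled correctly.
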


Note (see \cite{Pes09}), that  $f\in L_{2}(\bold M)$ belongs to the Besov space $\mathbf{B}_{2,\infty}^{\alpha}(\bold M)$ if and only if 
$$
\Omega_{m}\left(f, 1/\omega\right)=O(\omega^{-\alpha}),
$$ when $\omega\longrightarrow\infty$. Thus, we obtain that for functions in $
\mathbf{B}_{2,\infty}^{\alpha}(\bold M)$ the following relation holds
\begin{equation}
\left |\int_{\bold M}f-\sum_{x_{j}}\nu_{j}f_{\omega}(x_{j})\right |=O(\omega^{-\alpha}),\>\>\>\omega\longrightarrow
\infty.
\end{equation}

\section{Cubature formulas on compact manifolds which are exact on variational splines}

\bigskip

Given a $\rho$ lattice $M_{\rho}=\{x_{\gamma}\}$ and a sequence $\{z_{\gamma}\}\in l_{2}$ we
will be
 interested to find a
 function $s_{k}\in H^{2k}(\bold M),$ where $k $ is large enough, such that
\begin{enumerate}
\item $ s_{k}(x_{\gamma})=z_{\gamma}, x_{\gamma}\in \ M_{\rho};$

\item function $s_{k}$ minimizes functional $g\rightarrow \|\mathcal{L}^{k}g\|_{L_{2}(\bold M)}$.
\end{enumerate}
We already know (\ref{Sob11}), (\ref{Sob22})
that for $k\geq d$ the norm $H^{2k}(\bold M)$ is equivalent to the  norm

$$
C_{1}(\rho)\|f\|_{H^{2k}(\bold M)}\leq \|\mathcal{L}^{k}f\|_{L_{2}(\bold M)}+\left (\sum_{x_{\gamma}\in
M_{\rho}}|f(x_{\gamma})|^{2}\right)^{1/2}\leq C_{2}(\rho)\|f\|_{H^{2k}(\bold M)}.
$$
For the given sequence  \  $\{z_{\gamma} \}\in l_{2}$ consider a function $f$
from $H^{2k}(\bold M)$ such that $f(x_{\gamma})=z_{\gamma}.$ Let $Pf$
 denote the orthogonal projection of this function $f$  in the Hilbert
space $H^{2k}(\bold M)$ with the  inner product
$$<f,g>=\sum_{x_{\gamma}\in
M_{\rho}}f(x_{\gamma})g(x_{\gamma})+ <\mathcal{L}^{k/2}f,\mathcal{L}^{k/2}g>$$
on the subspace
$U^{2k}(M_{\rho})=\left \{f\in H^{2k}(\bold M)|f(x_{\gamma})=0\right \}$ with the norm generated 
by the same inner product.
Then the function $g=f-Pf$ will be the unique solution of the
above minimization problem for the
 functional $g\rightarrow \|\mathcal{L}^{k}g\|_{L_{2}(\bold M)},
 k\geq d$.
 
 Different parts of the following theorem can be found in \cite{Pes08}.

\begin{thm} The following statements hold:

\begin{enumerate}

  \item for any function $f$ from $H^{2k}(\bold M), \>\>\>k\geq d, $ there exists a unique
function $s_{k}(f)$ from the Sobolev space $H^{2k}(\bold M), $ such that
$ f|_{M_{\rho}}=s_{k}(f)|_{M_{\rho}}; $ and this function 
  $s_{k}(f)$ minimizes the functional $u\rightarrow \|\mathcal{L} ^{k}u\|_{L_{2}(\bold M)}$;

\item  every such function $s_{k}(f)$ is of the form 
$$
s_{k}(f)=\sum_{x_{\gamma}\in M_{\rho}}
f(x_{\gamma})L^{2k}_{\gamma}
$$
 where the
function $L^{2k}_{\gamma }\in H^{2k}(\bold M), \>\>\>x_{\gamma }\in M_{\rho}$ minimizes the
same functional and
 takes value $1$ at the point $x_{\gamma}$
and $0$ at all other points of $M_{\rho}$;

\item   functions $L^{2k}_{\gamma}$
form a Riesz basis in the space
 of all polyharmonic functions with singularities on $M_{\rho}$ i.e.  in the
space of such functions from
 $H^{2k}(\bold M )$ which in the sense of distributions satisfy equation

$$\mathcal{L} ^{2k}u=\sum_{x_{\gamma }\in M_{\rho}}\alpha _{\gamma }\delta (x_{\gamma
})$$ where $\delta (x_{\gamma})$ is the Dirac measure at the point
$x_{\gamma }$; 

\item   if in addition the
set $M_{\rho}$ is invariant under some subgroup of diffeomorphisms acting on
$M$ then every two functions $L^{2k}_{\gamma}, L^{2k}_{\mu}$
 are translates of each other.
 \end{enumerate}
 \label{Splines}
 \end{thm}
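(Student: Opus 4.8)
The plan is to identify the interpolating spline $s_{k}(f)$ with an orthogonal projection in a suitable Hilbert completion of $C^{\infty}(\bold M)$, and then to read off all four assertions from that description together with the sampling inequalities of Section 2.

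Fix $k\geq d$ as in the statement, and equip $H^{2k}(\bold M)$ with the inner product $\langle f,g\rangle_{\rho}=\sum_{x_{\gamma}\in M_{\rho}}f(x_{\gamma})g(x_{\gamma})+\langle\mathcal{L}^{k}f,\mathcal{L}^{k}g\rangle_{L_{2}(\bold M)}$; by (\ref{Sob11})--(\ref{Sob22}) applied with $m=2k$ the associated norm $\|\cdot\|_{\rho}$ is equivalent to $\|\cdot\|_{H^{2k}(\bold M)}$, and since $H^{2k}(\bold M)\hookrightarrow C(\bold M)$ each evaluation $f\mapsto f(x_{\gamma})$ is continuous, so $U^{2k}(M_{\rho})=\{h\in H^{2k}(\bold M):h|_{M_{\rho}}=0\}$ is closed. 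Let $P$ be the orthogonal projection onto $U^{2k}(M_{\rho})$ and $V=U^{2k}(M_{\rho})^{\perp}$. For $f\in H^{2k}(\bold M)$ the function $s_{k}(f):=f-Pf\in V$ agrees with $f$ on $M_{\rho}$; if $h\in H^{2k}(\bold M)$ also agrees with $f$ on $M_{\rho}$ then $h-s_{k}(f)\in U^{2k}(M_{\rho})$, so by orthogonality of $V$ and $U^{2k}(M_{\rho})$ we get $0=\langle s_{k}(f),h-s_{k}(f)\rangle_{\rho}=\langle\mathcal{L}^{k}s_{k}(f),\mathcal{L}^{k}(h-s_{k}(f))\rangle_{L_{2}(\bold M)}$ (the sampling part drops out since $h-s_{k}(f)$ vanishes on $M_{\rho}$), whence the Pythagorean identity $\|\mathcal{L}^{k}h\|^{2}_{L_{2}(\bold M)}=\|\mathcal{L}^{k}s_{k}(f)\|^{2}_{L_{2}(\bold M)}+\|\mathcal{L}^{k}(h-s_{k}(f))\|^{2}_{L_{2}(\bold M)}$. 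Thus $s_{k}(f)$ minimises $u\mapsto\|\mathcal{L}^{k}u\|_{L_{2}(\bold M)}$ among all $H^{2k}(\bold M)$-interpolants of $f|_{M_{\rho}}$; equality in the last display forces $\mathcal{L}^{k}(h-s_{k}(f))=0$, and a function in $\ker\mathcal{L}^{k}=\ker\mathcal{L}$ vanishing on $M_{\rho}$ is zero (the ground state $u_{0}$ has no zeros), so the minimiser is unique. This gives (1).

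For (2), pick for each $\gamma$ a bump function $e_{\gamma}\in H^{2k}(\bold M)$ with $e_{\gamma}(x_{\mu})=\delta_{\gamma\mu}$ (possible since the $x_{\gamma}$ are $\rho/4$-separated by Lemma \ref{covL}) and set $L^{2k}_{\gamma}:=s_{k}(e_{\gamma})\in V$; by (1) it minimises $\|\mathcal{L}^{k}\cdot\|_{L_{2}(\bold M)}$, equals $1$ at $x_{\gamma}$ and $0$ at all other points of $M_{\rho}$. The restriction map $R\colon V\to l_{2}(M_{\rho})$, $Rv=(v(x_{\gamma}))_{\gamma}$, is injective (kernel $U^{2k}(M_{\rho})\cap V=\{0\}$) and onto (given data $(z_{\gamma})$, $\sum_{\gamma}z_{\gamma}e_{\gamma}$ interpolates it and $s_{k}$ of it lies in $V$), and $R$ sends $L^{2k}_{\gamma}$ to the $\gamma$-th unit vector; hence $\{L^{2k}_{\gamma}\}$ is a basis of $V$, and since $s_{k}(f)$ and $\sum_{\gamma}f(x_{\gamma})L^{2k}_{\gamma}$ both lie in $V$ and agree on $M_{\rho}$ they coincide. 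For (3), the condition $v\in V$ means exactly that the continuous functional $h\mapsto\langle\mathcal{L}^{2k}v,h\rangle=\langle\mathcal{L}^{k}v,\mathcal{L}^{k}h\rangle_{L_{2}(\bold M)}$ on $H^{2k}(\bold M)$ vanishes on $U^{2k}(M_{\rho})$; since $M_{\rho}$ is finite and $2k>n/2$ makes $\delta(x_{\gamma})\in H^{-2k}(\bold M)$, the annihilator of $U^{2k}(M_{\rho})$ in $H^{-2k}(\bold M)$ is the span of the $\delta(x_{\gamma})$, so $\mathcal{L}^{2k}v=\sum_{\gamma}\alpha_{\gamma}\delta(x_{\gamma})$; conversely any $H^{2k}(\bold M)$-solution of such an equation pairs to $0$ against $U^{2k}(M_{\rho})$, hence lies in $V$. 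So $V$ is precisely the polyharmonic space of the statement, with basis $\{L^{2k}_{\gamma}\}$. That this is a Riesz basis amounts to $A\sum_{\gamma}|c_{\gamma}|^{2}\leq\rho^{-n}\big\|\sum_{\gamma}c_{\gamma}L^{2k}_{\gamma}\big\|^{2}_{H^{2k}(\bold M)}\leq B\sum_{\gamma}|c_{\gamma}|^{2}$ with $A,B>0$ independent of $\rho$: writing $w=\sum_{\gamma}c_{\gamma}L^{2k}_{\gamma}$ (so $w(x_{\gamma})=c_{\gamma}$), the lower bound is (\ref{Sob11}), while the upper bound follows from (\ref{Sob22}) once one notes that $w$ minimises $\|\mathcal{L}^{k}\cdot\|_{L_{2}(\bold M)}$ among interpolants of $(c_{\gamma})$ and that a disjointly supported interpolant built from scale-$\rho$ bumps has $\|\mathcal{L}^{k}\cdot\|_{L_{2}(\bold M)}\leq C\rho^{n/2-2k}\big(\sum_{\gamma}|c_{\gamma}|^{2}\big)^{1/2}$, making the term $\rho^{2k}\|\mathcal{L}^{k}w\|_{L_{2}(\bold M)}$ in (\ref{Sob22}) of size $\leq C\rho^{n/2}\big(\sum_{\gamma}|c_{\gamma}|^{2}\big)^{1/2}$.

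Finally, for (4) let $\Gamma$ be a group of diffeomorphisms of $\bold M$ that preserves $\mathcal{L}$ (so $\mathcal{L}(u\circ\sigma)=(\mathcal{L}u)\circ\sigma$ and each $\sigma$ is volume-preserving) and permutes $M_{\rho}$; if $\sigma\in\Gamma$ with $\sigma(x_{\mu})=x_{\gamma}$ then $L^{2k}_{\mu}\circ\sigma^{-1}\in H^{2k}(\bold M)$ equals $1$ at $x_{\gamma}$ and $0$ at the other lattice points and has $\|\mathcal{L}^{k}(L^{2k}_{\mu}\circ\sigma^{-1})\|_{L_{2}(\bold M)}=\|\mathcal{L}^{k}L^{2k}_{\mu}\|_{L_{2}(\bold M)}$; since $u\mapsto u\circ\sigma$ maps interpolants of the data $\delta_{\gamma\cdot}$ bijectively onto those of $\delta_{\mu\cdot}$ and preserves the functional, $L^{2k}_{\mu}\circ\sigma^{-1}$ is the minimiser for $\delta_{\gamma\cdot}$ and hence, by the uniqueness in (1), equals $L^{2k}_{\gamma}$. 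The step I expect to be the main obstacle is the upper Riesz bound in (3): one must produce, for arbitrary $l_{2}(M_{\rho})$-data, an interpolant whose $\|\mathcal{L}^{k}\cdot\|_{L_{2}(\bold M)}$-norm is $\leq C\rho^{n/2-2k}$ times the $l_{2}$-norm of the data \emph{with $C$ independent of $\rho$}, which is where the scale-$\rho$ bump construction, the bounded overlap of the cover in Lemma \ref{covL}, and elliptic-regularity/Bernstein bookkeeping all enter; the rest is Hilbert-space manipulation on top of Theorems \ref{T1} and \ref{PP2}. A secondary technical point is the distributional Euler--Lagrange identification in (3), where $2k>n/2$ is needed precisely so that the point masses $\delta(x_{\gamma})$ genuinely belong to $H^{-2k}(\bold M)$.
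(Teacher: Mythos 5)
Your construction is correct and follows essentially the same route as the paper: the spline is obtained as $f-Pf$ with $P$ the orthogonal projection onto $U^{2k}(M_{\rho})$ in the modified inner product (exactly the construction the paper gives just before the theorem), your annihilator argument identifying $U^{2k}(M_{\rho})^{\perp}$ with the polyharmonic space is precisely the content of the lemma the paper quotes from \cite{Pes00}, and the remaining details for parts (2)--(4) and the uniform Riesz bounds, which the paper delegates to \cite{Pes08}, are filled in by you in a way consistent with that outline. One minor remark: the appeal to the ground state $u_{0}$ having no zeros is unnecessary for uniqueness, since $h-s_{k}(f)$ vanishes on $M_{\rho}$ and satisfies $\mathcal{L}^{k}(h-s_{k}(f))=0$, so its norm with respect to the (equivalent) inner product $\langle\cdot,\cdot\rangle_{\rho}$ is already zero.
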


The crucial role in the proof of the above Theorem \ref{Splines} belongs to the following  lemma which was proved in \cite{Pes00}.

\begin{lem}
 A function $f\in L_{2}(\bold M)$ satisfies equation

$$\mathcal{L}^{2k}f=\sum_{x_{\gamma}\in M_{\rho}}\alpha _{\gamma }\delta (x_{\gamma
}),$$ where
$\{ \alpha _{\gamma} \} \in l_{2}$ if and only if $f$ is a solution to the
minimization problem stated above.
 \end{lem}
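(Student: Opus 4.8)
The plan is to establish the equivalence by a standard variational argument based on the orthogonality characterization of minimizers in Hilbert spaces. First I would observe that the minimization problem stated above asks, for a given datum $f\in H^{2k}(\bold M)$ with prescribed values $f(x_\gamma)=z_\gamma$ on $M_\rho$, to find $s_k\in H^{2k}(\bold M)$ interpolating the $z_\gamma$ and minimizing $g\mapsto\|\mathcal L^k g\|_{L_2(\bold M)}$. As in the text preceding the lemma, $s_k=f-Pf$, where $P$ is the orthogonal projection onto $U^{2k}(M_\rho)=\{h\in H^{2k}(\bold M):h|_{M_\rho}=0\}$ with respect to the inner product $\langle f,g\rangle=\sum_{x_\gamma\in M_\rho}f(x_\gamma)g(x_\gamma)+\langle\mathcal L^{k/2}f,\mathcal L^{k/2}g\rangle$; note this inner product is equivalent to the $H^{2k}$ norm by the displayed two-sided estimate, so $H^{2k}(\bold M)$ is a Hilbert space under it and $P$ is well defined. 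The minimizer $s_k$ is characterized by the Euler--Lagrange condition that $s_k$ be orthogonal, in the $L_2$-seminorm $\langle\mathcal L^k\cdot,\mathcal L^k\cdot\rangle$, to every $h\in U^{2k}(M_\rho)$; equivalently $\langle\mathcal L^{2k}s_k,h\rangle_{L_2}=0$ for all $h\in H^{2k}(\bold M)$ vanishing on $M_\rho$.

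Next I would unpack that orthogonality condition in the sense of distributions. The functions $h\in H^{2k}(\bold M)$ with $h|_{M_\rho}=0$ are precisely those annihilated by the point evaluations $\delta(x_\gamma)$; since $2k>n/2$ (as $k\ge d$ and $n=\dim\bold M$), these point evaluations are continuous functionals on $H^{2k}(\bold M)$ by Theorem \ref{T1}, so $U^{2k}(M_\rho)$ is a closed subspace of finite codimension equal to $|M_\rho|$, with annihilator spanned by $\{\delta(x_\gamma)\}_{x_\gamma\in M_\rho}$. The condition $\langle\mathcal L^{2k}s_k,h\rangle=0$ for all such $h$ therefore says exactly that the distribution $\mathcal L^{2k}s_k$ lies in the span of the $\delta(x_\gamma)$, i.e. $\mathcal L^{2k}s_k=\sum_{x_\gamma\in M_\rho}\alpha_\gamma\delta(x_\gamma)$ for some scalars $\alpha_\gamma$. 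Conversely, if $f\in L_2(\bold M)$ satisfies this distributional equation then first one checks $f\in H^{2k}(\bold M)$ (elliptic regularity: $\delta(x_\gamma)\in H^{-n/2-\varepsilon}(\bold M)\subset H^{2k-4k}(\bold M)$ for appropriate ranges, so $\mathcal L^{2k}f\in H^{-2k}(\bold M)$ gives $f\in H^{2k}(\bold M)$), and then reversing the pairing shows $f$ is $L_2$-orthogonal to every $h\in U^{2k}(M_\rho)$, hence $f$ solves the minimization problem for its own boundary data $z_\gamma=f(x_\gamma)$.

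The main obstacle, and the point requiring care, is the duality bookkeeping: one must be precise about which Sobolev space the distributions $\delta(x_\gamma)$ and $\mathcal L^{2k}s_k$ live in, and verify that the $L_2$ pairing $\langle\mathcal L^{k}s_k,\mathcal L^{k}h\rangle$ genuinely extends to the distributional pairing $\langle\mathcal L^{2k}s_k,h\rangle$ — this uses self-adjointness of $\mathcal L$ and the fact that $h\in H^{2k}(\bold M)$ with $2k$ large enough that $\mathcal L^{2k}s_k\in H^{-2k}(\bold M)$ pairs continuously with $H^{2k}(\bold M)$. A secondary point is the ``if and only if'': the forward direction needs existence/uniqueness of the minimizer (immediate from the projection theorem once the inner product is fixed), while the reverse direction needs the elliptic-regularity step to upgrade an $L_2$ solution of the distributional equation to an $H^{2k}$ function before the variational characterization can be invoked. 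Once these technical matchings are in place, the equivalence is exactly the translation of ``$s_k$ is the $\langle\mathcal L^k\cdot,\mathcal L^k\cdot\rangle$-orthogonal complement of $U^{2k}(M_\rho)$ within its coset'' into ``$\mathcal L^{2k}s_k$ is supported on $M_\rho$ as a combination of Dirac masses,'' which is what the lemma asserts.
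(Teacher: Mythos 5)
Your argument is correct: the quadratic expansion $\|\mathcal L^{k}(s_k+th)\|^{2}=\|\mathcal L^{k}s_k\|^{2}+2t\langle \mathcal L^{k}s_k,\mathcal L^{k}h\rangle+t^{2}\|\mathcal L^{k}h\|^{2}$ gives the orthogonality characterization in both directions, the annihilator of $U^{2k}(M_{\rho})$ is spanned by the finitely many point evaluations $\delta(x_{\gamma})$ (continuous on $H^{2k}$ since $2k>n/2$, and on a compact manifold the $\ell_2$ condition on $\{\alpha_{\gamma}\}$ is automatic), and elliptic regularity upgrades an $L_{2}$ solution of the distributional equation to $H^{2k}$ before reversing the pairing. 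The paper itself gives no proof of this lemma, only a citation to [Pes00]; your variational/duality argument is essentially the standard one used there, so there is nothing substantive to flag beyond minor wording (it is $\mathcal L^{k}f$, not $f$, that is $L_{2}$-orthogonal to $\mathcal L^{k}h$).
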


Next, if $f\in H^{2k}(\bold M), k\geq d, $ then
 $f-s_{k}(f)\in U^{2k}(M_{\rho})$ and we have for
$k\geq d, $
  $$\|f-s_{k}(f)\|_{L_{2}(\bold M)}\leq
 (C_{0}\rho)^{k}\|\mathcal{L}^{k/2}(f-s_{k}(f))\|_{L_{2}(\bold M)}.$$

Using minimization property of $s_{k}(f)$ we obtain
the inequality 
\begin{equation}
\left\|f-\sum_{x_{\gamma}\in M_{\rho}}f(x{_\gamma})L_{x_{\gamma}}\right\|_{L_{2}(\bold M)}\leq (c_{0}\rho)^{k}\|\mathcal{L} ^{k/2}f\|_{L_{2}(\bold M)}, k\geq d,\label{SobAppr}
\end{equation}
and for $f\in {\mathbf E}_{\omega}(\mathcal{L})$ the Bernstein inequality gives 
for any $f\in {\mathbf E}_{\omega}(\mathcal{L})$ 
\begin{equation}
\left\|f-\sum_{x_{\gamma}\in M_{\rho}} f(x{_\gamma})L_{x_{\gamma}}\right\|_{L_{2}(\bold M)}\leq(c_{0}\rho\sqrt{\omega} )^{k}\|f\|_{L_{2}(\bold M)},\label{PWAppr}
\end{equation}
for $ k\geq d$. The last  inequality shows in particular, that for any $f\in  {\mathbf E}_{\omega}(\mathcal{L})$ one has the following reconstruction algorithm.
\begin{thm}
There exists a $c_{0}=c_{0}(M)$  such that for any $\omega>0$ and any $M_{\rho}$ with 
   $\rho =c_{0}\omega^{-1}$ the following reconstruction formula holds in $L_{2}(M)$-norm
   \begin{equation}
f=\lim_{l\rightarrow \infty}\sum_{x_{j}\in M_{\rho}}f(x_{j})L_{x_{j}}^{(k)}, \  k\geq d,
\end{equation}
for all  $f\in  {\mathbf E}_{\omega}(\mathcal{L})$.
\end{thm}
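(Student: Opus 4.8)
The plan is to read the reconstruction formula off directly from the geometric approximation estimate (\ref{PWAppr}), with the sum on the right interpreted as the variational spline interpolant of growing order and with the sample values $f(x_{j})$ frozen as its coefficients. First I would fix $f\in{\mathbf E}_{\omega}(\mathcal{L})$ and record that, since band-limited functions are smooth, $f\in H^{2l}(\bold M)$ for every $l$; hence Theorem \ref{Splines} supplies, for each integer $l\geq d$, a unique variational spline
$$s_{l}(f)=\sum_{x_{j}\in M_{\rho}}f(x_{j})L_{x_{j}}^{(l)}$$
of order $l$ that interpolates $f$ on $M_{\rho}$. The point to keep in view is that along the limit only the order $l$ moves: the coefficients $f(x_{j})$ remain the fixed samples of $f$, so the assertion is exactly that these interpolants converge to $f$ in $L_{2}(\bold M)$ as $l\to\infty$.

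Next I would apply (\ref{PWAppr}) with the running order $l$ in place of $k$ (this inequality being itself (\ref{SobAppr}) combined with the Bernstein inequality (\ref{BI})), which gives for $f\in{\mathbf E}_{\omega}(\mathcal{L})$ and $l\geq d$
$$\left\|f-s_{l}(f)\right\|_{L_{2}(\bold M)}\leq\left(c_{0}\rho\sqrt{\omega}\right)^{l}\|f\|_{L_{2}(\bold M)}.$$
Substituting the prescribed spacing $\rho=c_{0}\omega^{-1}$ collapses the contraction factor to $c_{0}\rho\sqrt{\omega}=c_{0}^{2}\omega^{-1/2}$. I would then choose the constant $c_{0}=c_{0}(\bold M)$ small enough that this factor equals some $q<1$; the estimate becomes $\|f-s_{l}(f)\|_{L_{2}(\bold M)}\leq q^{l}\|f\|_{L_{2}(\bold M)}$, and letting $l\to\infty$ forces $s_{l}(f)\to f$ in $L_{2}(\bold M)$, which is the asserted formula. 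No new estimate is needed: the theorem simply repackages the geometric decay of (\ref{PWAppr}) as a statement about the interpolation operator acting on fixed data.

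The one genuine point to be careful about is the calibration ensuring $c_{0}\rho\sqrt{\omega}<1$; this is where the size of $c_{0}$ and the scaling $\rho\sim\omega^{-1}$ truly enter. With $\rho=c_{0}\omega^{-1}$ the factor is $c_{0}^{2}\omega^{-1/2}$, so a single sufficiently small $c_{0}$ secures $q<1$ uniformly once $\omega$ is bounded away from $0$; the complementary regime of very small $\omega$, where ${\mathbf E}_{\omega}(\mathcal{L})$ reduces to the constants that $\mathcal{L}$ annihilates, I would handle directly, since there the minimizer of $\|\mathcal{L}^{l}\cdot\|_{L_{2}(\bold M)}$ through the data $\{f(x_{j})\}$ is $f$ itself and the interpolant reproduces $f$ exactly. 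Once $c_{0}$ is fixed so that $q<1$ on the whole range, the geometric bound $q^{l}\to 0$ closes the argument.
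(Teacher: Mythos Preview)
Your proposal is correct and follows exactly the route the paper intends: the paper does not give a separate proof of this theorem but simply states that ``the last inequality shows in particular'' the reconstruction formula, the last inequality being (\ref{PWAppr}). You have supplied precisely the missing details, namely that the contraction factor $(c_{0}\rho\sqrt{\omega})^{l}$ tends to zero as the spline order $l\to\infty$ once the calibration $c_{0}\rho\sqrt{\omega}<1$ is secured, and you have correctly handled the residual case of very small $\omega$ by observing that constants are reproduced exactly by the variational spline.
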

To develop  a cubature formula   we  introduce the notation 
\begin{equation}
\lambda_{\gamma}^{(k)}=\int_{\bold M}L_{x_{\gamma}}^{(k)}(x)dx,
\end{equation}
where $L_{x_{\gamma}}\in S^{k}(M_{\rho})$ is the Lagrangian spline at the node $x_{\gamma}$.
\begin{thm}

\begin{enumerate}

\item 

For any $f\in H^{2k}(M)$ one has 
 \begin{equation}
\int_{\bold M}f dx\approx\sum_{x_{j}\in M_{\rho}}\lambda_{j}^{(k)}f(x{_j}),\   k\geq d, \label{ApproxInt}
\end{equation}
and the error given by the inequality
\begin{equation}
\left|\int_{\bold M}f dx-\sum_{x_{\gamma}\in M_{\rho}}\lambda_{\gamma}^{(k)}f(x_{\gamma})\right|\leq Vol (\bold M)(c_{0}\rho )^{k}\|\mathcal{L} ^{k/2}f\|_{L_{2}(\bold M)},\label{qubSob}
\end{equation}
for $ k\geq d$.  For a fixed function $f$ the right-hand side of (\ref{qubSob}) goes to zero as long as $\rho$ goes to zero.

\item 
The  formula (\ref{ApproxInt}) is exact for any variational spline $f\in S^{k}(M_{\rho})$ of order $k$ with singularities on $M_{\rho}$.

\end{enumerate}
\end{thm}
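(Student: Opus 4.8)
The plan is to leverage the two reconstruction/approximation results already established, namely the error estimate (\ref{SobAppr}) for the Lagrangian-spline interpolant together with the exactness of that interpolant on splines $f\in S^{k}(M_{\rho})$. The key observation is that integration against $dx$ is a bounded linear functional on $L_{2}(\bold M)$, with $\left|\int_{\bold M}g\,dx\right|\leq \mathrm{Vol}(\bold M)^{1/2}\|g\|_{L_{2}(\bold M)}$; in fact since we only care about the constant in front, one uses $\left|\int_{\bold M}g\,dx\right|\leq \mathrm{Vol}(\bold M)\,\|g\|_{L_{\infty}}$ but cleaner is to just apply Cauchy--Schwarz to pass from the $L_{2}$ estimate to the integral estimate.

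First I would write, for $f\in H^{2k}(\bold M)$ with $k\geq d$,
\begin{equation}
\int_{\bold M}f\,dx-\sum_{x_{\gamma}\in M_{\rho}}\lambda_{\gamma}^{(k)}f(x_{\gamma})
=\int_{\bold M}\Bigl(f-\sum_{x_{\gamma}\in M_{\rho}}f(x_{\gamma})L_{x_{\gamma}}^{(k)}\Bigr)\,dx,
\end{equation}
which is exactly the point of defining $\lambda_{\gamma}^{(k)}=\int_{\bold M}L_{x_{\gamma}}^{(k)}(x)\,dx$: the weights are chosen precisely so that applying the integration functional to the reconstruction identity $f\approx\sum f(x_{\gamma})L_{x_{\gamma}}^{(k)}$ term-by-term produces the cubature sum. (One should note in passing that the sum over $\gamma$ can be interchanged with the integral because $M_{\rho}$ is, by the Weyl-type count in Section 2, a finite set on a compact manifold, and the Lagrangian splines lie in $H^{2k}\subset L_{2}$.) Then I would bound the right-hand side by Cauchy--Schwarz,
\begin{equation}
\left|\int_{\bold M}\Bigl(f-\sum_{x_{\gamma}}f(x_{\gamma})L_{x_{\gamma}}^{(k)}\Bigr)\,dx\right|
\leq \mathrm{Vol}(\bold M)^{1/2}\,\Bigl\|f-\sum_{x_{\gamma}}f(x_{\gamma})L_{x_{\gamma}}^{(k)}\Bigr\|_{L_{2}(\bold M)}
\leq \mathrm{Vol}(\bold M)^{1/2}(c_{0}\rho)^{k}\|\mathcal{L}^{k/2}f\|_{L_{2}(\bold M)},
\end{equation}
invoking (\ref{SobAppr}) in the last step. (The stated inequality (\ref{qubSob}) has $\mathrm{Vol}(\bold M)$ rather than $\mathrm{Vol}(\bold M)^{1/2}$; this is a harmless overestimate once $\mathrm{Vol}(\bold M)\geq 1$, or it can be absorbed by adjusting the constant $c_{0}$ — in any case it follows from the same line of reasoning, with the cruder bound $\left|\int_{\bold M}g\right|\leq\mathrm{Vol}(\bold M)\|g\|_{L_{2}}$ valid when $\mathrm{Vol}(\bold M)\geq1$.) The final remark, that the error tends to $0$ as $\rho\to0$ for fixed $f$, is then immediate since $(c_{0}\rho)^{k}\to0$ and $\|\mathcal{L}^{k/2}f\|_{L_{2}}$ is a fixed finite quantity for $f\in H^{2k}(\bold M)$.

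For part (2), if $f\in S^{k}(M_{\rho})$ is a variational spline of order $k$ with singularities on $M_{\rho}$, then by Theorem~\ref{Splines}(2) $f$ equals its own Lagrangian interpolant, $f=\sum_{x_{\gamma}\in M_{\rho}}f(x_{\gamma})L_{x_{\gamma}}^{(k)}$ — indeed $f=s_{k}(f)$ because $f$ already solves the minimization problem with its own nodal data. Integrating this identity over $\bold M$ and using the definition of $\lambda_{\gamma}^{(k)}$ gives $\int_{\bold M}f\,dx=\sum_{x_{\gamma}}\lambda_{\gamma}^{(k)}f(x_{\gamma})$ exactly, so (\ref{ApproxInt}) is exact on $S^{k}(M_{\rho})$; alternatively this is just the $\rho$-independent reading of the estimate in part (1), since for $f\in S^{k}(M_{\rho})$ the left side of the interpolation error vanishes identically. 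I do not anticipate a genuine obstacle here: the proof is a one-line application of a bounded functional to results already proved. The only point requiring a modicum of care is the legitimacy of swapping the (finite) sum with the integral and the precise bookkeeping of the constant $\mathrm{Vol}(\bold M)$ versus $\mathrm{Vol}(\bold M)^{1/2}$ in the statement of (\ref{qubSob}), which is a cosmetic rather than substantive issue.
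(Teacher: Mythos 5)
Your proposal is correct and is essentially the paper's (unwritten) argument: the theorem is stated as an immediate consequence of the spline approximation estimate (\ref{SobAppr}) together with the definition of the weights $\lambda_{\gamma}^{(k)}=\int_{\bold M}L_{x_{\gamma}}^{(k)}$, exactly as you do via Cauchy--Schwarz, with exactness on $S^{k}(M_{\rho})$ following because a spline coincides with its own Lagrangian interpolant. The $\mathrm{Vol}(\bold M)$ versus $\mathrm{Vol}(\bold M)^{1/2}$ discrepancy you flag is indeed only a matter of constant bookkeeping.
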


By applying the Bernstein inequality we obtain the following theorem.  This result explains our term "asymptotically correct cubature formulas".
\begin{thm}
For any $f\in {\mathbf E}_{\omega}(\mathcal{L})$ one has 
\begin{equation}
\left|\int_{\bold M}f dx-\sum_{x_{\gamma}\in M_{\rho}} \lambda_{\gamma}^{(k)}f(x_{\gamma})\right|\leq Vol(\bold M)(c_{0}\rho\sqrt{\omega} )^{k}\|f\|_{L_{2}(\bold M)},\label{qubPW}
\end{equation}
for $ k\geq d$.  If $\rho=c_{0}\omega^{-1/2}$
the right-hand side in (\ref{qubPW}) goes to zero   for all $f\in {\mathbf E}_{\omega}(\mathcal{L})$ as long as $k$ goes to infinity.
\label{cubPW}
\end{thm}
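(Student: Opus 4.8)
The plan is to obtain the estimate (\ref{qubPW}) as an immediate consequence of the Sobolev-norm error bound (\ref{qubSob}) of the preceding theorem combined with the Bernstein inequality (\ref{BI}), which is available precisely because $f$ is band-limited.

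First I would observe that any $f\in {\mathbf E}_{\omega}(\mathcal{L})$ is a finite linear combination of the smooth eigenfunctions $u_{j}$, hence lies in $H^{2k}(\bold M)$ for every $k$; in particular the hypothesis $f\in H^{2k}(\bold M)$ of the previous theorem is met, and for every $k\geq d$ the inequality (\ref{qubSob}) yields
\[
\left|\int_{\bold M}f\,dx-\sum_{x_{\gamma}\in M_{\rho}}\lambda_{\gamma}^{(k)}f(x_{\gamma})\right|\leq Vol(\bold M)(c_{0}\rho)^{k}\|\mathcal{L}^{k/2}f\|_{L_{2}(\bold M)}.
\]

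Second, I would apply the Bernstein inequality (\ref{BI}) with $s=k/2$: since $f\in {\mathbf E}_{\omega}(\mathcal{L})$ we have $\|\mathcal{L}^{k/2}f\|_{L_{2}(\bold M)}\leq \omega^{k/2}\|f\|_{L_{2}(\bold M)}$. Substituting this bound into the right-hand side above gives
\[
\left|\int_{\bold M}f\,dx-\sum_{x_{\gamma}\in M_{\rho}}\lambda_{\gamma}^{(k)}f(x_{\gamma})\right|\leq Vol(\bold M)(c_{0}\rho)^{k}\omega^{k/2}\|f\|_{L_{2}(\bold M)}=Vol(\bold M)\left(c_{0}\rho\sqrt{\omega}\right)^{k}\|f\|_{L_{2}(\bold M)},
\]
which is exactly (\ref{qubPW}). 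For the closing assertion I would then set $\rho=c_{0}\omega^{-1/2}$, so that $c_{0}\rho\sqrt{\omega}=c_{0}^{2}$; since the constant $c_{0}=c_{0}(\bold M,\mathcal{L})$ enters only as an upper threshold on the mesh size of admissible $\rho$-lattices, we may assume $c_{0}<1$, and then $\left(c_{0}^{2}\right)^{k}\to 0$ as $k\to\infty$, uniformly on the unit ball of ${\mathbf E}_{\omega}(\mathcal{L})$.

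There is no substantive obstacle here: the whole argument is the chaining of two inequalities already established in the paper. The only point deserving a word of care is ensuring that the geometric factor $c_{0}\rho\sqrt{\omega}$ is strictly less than $1$ (equivalently $c_{0}<1$ once $\rho=c_{0}\omega^{-1/2}$ is fixed), since otherwise letting $k\to\infty$ would not force the error to zero; this is harmless because shrinking $c_{0}$ only tightens the admissibility condition and costs nothing.
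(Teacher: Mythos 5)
Your argument is exactly the paper's: the estimate (\ref{qubPW}) is obtained by feeding the Bernstein inequality (\ref{BI}) with $s=k/2$ into the spline cubature error bound (\ref{qubSob}), noting that band-limited functions are smooth and hence lie in every $H^{2k}(\bold M)$. Your added remark that the limit statement as $k\to\infty$ requires $c_{0}\rho\sqrt{\omega}=c_{0}^{2}<1$ (harmlessly arranged by shrinking $c_{0}$) is a sensible point of care that the paper leaves implicit.
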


\section{Positive cubature formulas on compact manifolds}

Let $M_{\rho}=\{x_{k}\}, \ k=1,...,N(M_{\rho}),$ be a $\rho$-lattice on ${\bf M}$. 
We construct the Voronoi partition of   ${\bf M}$ associated to the set 
  $M_{\rho}=\{x_{k}\}, \ k=1,...,N(M_{\rho})$.   Elements of this partition will be denoted as 
$ \mathcal{M}_{k,\rho}$.  Let us recall that the distance from each point in $ \mathcal{M}_{j,\rho}$
to $x_j$ is less than or equal to its distance to any 
other point of the family  $M_{\rho}=\{x_{k}\}, \ k=1,...,N(M_{\rho})$.
Some properties of this cover of   ${\bf M}$ are summarized in the following Lemma. 
which follows easily from  the definitions.

\begin{lem}
\label{mkrho}
The sets $\mathcal{M}_{k,
\rho}, \  k=1,...,N(M_{\rho}),$ have the following properties:

1) they are measurable;

2) they are disjoint;

3) they form a cover of ${\bf M}$;

4) there exist positive $a_{1},\  a_{2}$, independent of $\rho$ and the lattice $M_{\rho}=\{x_{k}\}$, such that 

\begin{equation}
\label{mkrhoway}
a_{1}\rho^{n}\leq \mu\left(\mathcal{M}_{k,\rho}\right)\leq a_{2}\rho^{n}.
\end{equation}

\end{lem}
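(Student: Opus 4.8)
The plan is to verify the four claimed properties of the Voronoi cells directly from the definition of a metric $\rho$-lattice (Definition \ref{D1}) and the comparability $c_1(\bold M)\rho^n \leq Vol(B(x,\rho)) \leq c_2(\bold M)\rho^n$ that was already used in the proof of Theorem \ref{FT}. Properties (1), (2), (3) are essentially formal. Measurability of each $\mathcal{M}_{k,\rho}$ follows because it is defined by finitely many inequalities on the (continuous) distance function, $\mathcal{M}_{k,\rho}=\{x\in\bold M : d(x,x_k)\leq d(x,x_j)\text{ for all }j\}$, so it is closed, hence measurable. Disjointness: to get a genuine partition one breaks ties by assigning each point to the cell of least index among the minimizers, so the $\mathcal{M}_{k,\rho}$ are made pairwise disjoint with no change to their measure (the tie set $d(x,x_j)=d(x,x_k)$ has measure zero on a Riemannian manifold, being contained in a lower-dimensional set). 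That they cover $\bold M$ is immediate since every $x\in\bold M$ has \emph{some} nearest lattice point because $M_\rho$ is finite (Theorem \ref{FT}).

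The substance is the two-sided volume estimate (4). First I would establish the inclusions
$$
B(x_k,\rho/4)\subseteq \mathcal{M}_{k,\rho}\subseteq B(x_k,\rho/2).
$$
For the right inclusion: if $x\in\mathcal{M}_{k,\rho}$ then $d(x,x_k)\leq d(x,x_j)$ for every $j$; but by property (2) of Lemma \ref{covL} the balls $B(y_\nu,\rho/2)$ cover $\bold M$, so $x\in B(x_j,\rho/2)$ for some $j$, i.e. $d(x,x_j)<\rho/2$, whence $d(x,x_k)\leq d(x,x_j)<\rho/2$. For the left inclusion: suppose $d(x,x_k)<\rho/4$; for any other lattice point $x_j$, since the balls $B(\cdot,\rho/4)$ are disjoint (property (1) of Lemma \ref{covL}) the centers satisfy $d(x_k,x_j)\geq \rho/2$, so by the triangle inequality $d(x,x_j)\geq d(x_k,x_j)-d(x,x_k) > \rho/2-\rho/4=\rho/4 > d(x,x_k)$; thus $x$ is strictly closer to $x_k$ than to any other node, so (with the tie-breaking convention) $x\in\mathcal{M}_{k,\rho}$. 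Taking measures and invoking the volume comparability $c_1(\bold M)\rho^n\leq Vol(B(x,\rho))\leq c_2(\bold M)\rho^n$ (valid for all sufficiently small $\rho$ and all centers, as recalled in the proof of Theorem \ref{FT}) yields
$$
\frac{c_1(\bold M)}{4^n}\,\rho^n \leq \mu(\mathcal{M}_{k,\rho}) \leq \frac{c_2(\bold M)}{2^n}\,\rho^n,
$$
so one may take $a_1 = c_1(\bold M)/4^n$ and $a_2 = c_2(\bold M)/2^n$, which are indeed independent of $\rho$ and of the particular lattice.

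I expect the only genuine point requiring care — and the one I would flag as the main (minor) obstacle — is the tie-breaking issue needed to turn the closed Voronoi cells into a true disjoint partition while keeping the volume bounds intact; everything else is a direct consequence of Lemma \ref{covL} and the standard Riemannian volume comparison. One should note that the upper bound in the volume comparison, as well as disjointness being harmless, only requires that $\rho$ be sufficiently small, which is already assumed throughout (Lemma \ref{covL}, Definition \ref{D1}); hence the constants $a_1,a_2$ depend only on $\bold M$, as asserted.
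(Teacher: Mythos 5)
Your proof is correct, and it is exactly the elementary argument the paper intends: the paper itself gives no proof of this lemma, remarking only that it "follows easily from the definitions." Your inclusions $B(x_k,\rho/4)\subseteq \mathcal{M}_{k,\rho}\subseteq B(x_k,\rho/2)$, obtained from the separation and covering properties of the $\rho$-lattice, combined with the standard volume comparison $c_1(\bold M)\rho^{n}\leq Vol(B(x,\rho))\leq c_2(\bold M)\rho^{n}$ and a tie-breaking convention for disjointness, supply precisely the omitted details.
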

In what follows we are using partition of unity $\Psi=\{\psi_{\nu}\}$ which appears in (\ref{Sobnorm}).
Our next goal is to prove the following fact.
\begin{thm} Say $\rho > 0$, and let $\left\{\mathcal{M}_{k,\rho}\right\}$ be the disjoint cover
 of ${\bf M}$ which is associated with a $\rho$-lattice $M_{\rho}$.  If $\rho$ is 
sufficiently small  then for any sufficiently large $K\in \mathbb{N}$ there exists a $C(K)>0$ such that for all  smooth functions $f$ the following inequality holds:
\begin{equation}
\left|\sum_{\nu}\sum_{x_{k}\in M_{\rho}}\psi_{\nu}f(x_{k})\  \mu \mathcal{M}_{k,\rho}-\int_{{\bf M}}f(x)dx\right|\leq
$$
$$
 C(K)\sum_{|\beta|=1}^{K}\rho^{n/2+|\beta|}\|(I+\mathcal{L})^{|\beta|/2}f\|_{L_{2}(\bold M)},\label{closeness}
\end{equation}
where $C(K)$ is independent of $\rho$ and the $\rho$-lattice $M_{\rho}$.
\end{thm}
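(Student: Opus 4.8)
The plan is to estimate the difference between the Riemann-type sum and the integral by working locally on each Voronoi cell, exploiting the fact that on $\mathcal{M}_{k,\rho}$ every point is within distance $O(\rho)$ of the node $x_k$, so that $f(x)$ differs from $f(x_k)$ by a Taylor remainder controlled by the derivatives of $f$. First I would write
$$
\sum_{\nu}\sum_{x_k}\psi_\nu f(x_k)\,\mu\mathcal{M}_{k,\rho}-\int_{\bf M} f(x)\,dx = \sum_\nu\sum_{x_k}\int_{\mathcal{M}_{k,\rho}}\bigl(\psi_\nu f(x_k)-\psi_\nu f(x)\bigr)\,dx,
$$
using properties (2) and (3) of Lemma \ref{mkrho} to turn $\mu\mathcal{M}_{k,\rho}$ into $\int_{\mathcal{M}_{k,\rho}}dx$ and to recombine the integrals over the cells into $\int_{\bf M}$. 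Since $\mathrm{supp}\,\psi_\nu$ lies in a coordinate ball $B(y_\nu,r_0)$, only finitely many (at most $N_{\bf M}$) values of $\nu$ contribute near each cell, so it suffices to bound each summand $\psi_\nu(f(x_k)-f(x))$ in local coordinates.

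Next I would perform the Taylor expansion of $\psi_\nu f$ in the coordinate chart around $x_k$ to order $K$: for $x\in\mathcal{M}_{k,\rho}$,
$$
\psi_\nu f(x)-\psi_\nu f(x_k)=\sum_{1\le|\beta|\le K-1}\frac{(x-x_k)^\beta}{\beta!}\partial^\beta(\psi_\nu f)(x_k)+R_K(x,x_k),
$$
with $|R_K(x,x_k)|\le C\rho^{K}\sup_{|\beta|=K}|\partial^\beta(\psi_\nu f)|$ on the cell (here $|x-x_k|\lesssim\rho$). Integrating over $\mathcal{M}_{k,\rho}$ and using $\mu\mathcal{M}_{k,\rho}\le a_2\rho^n$ from \eqref{mkrhoway}, the remainder term contributes $\lesssim\rho^{n+K}\sup|\partial^\beta(\psi_\nu f)|$ per cell, and for the polynomial terms $\int_{\mathcal{M}_{k,\rho}}(x-x_k)^\beta\,dx$ is bounded by $a_2\rho^{n+|\beta|}$. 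Summing over the $N(M_\rho)\asymp\rho^{-n}$ cells (Theorem \ref{FT}) collapses one factor $\rho^n$; then applying Cauchy–Schwarz over the cells converts the pointwise sup-bounds on $\partial^\beta(\psi_\nu f)$ at the nodes $x_k$ into an $\ell^2$-sum $\bigl(\sum_{x_k}|\partial^\beta(\psi_\nu f)(x_k)|^2\bigr)^{1/2}$, with a gain of a further $\rho^{n/2}$ from the Cauchy–Schwarz against the sequence $(\mu\mathcal{M}_{k,\rho})^{1/2}\lesssim\rho^{n/2}$. This is precisely where the exponent $\rho^{n/2+|\beta|}$ in \eqref{closeness} comes from.

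Finally I would invoke the sampling inequality (\ref{Sob11}) of Theorem \ref{T1}: since $\rho=c_0\omega^{-1/2}$-type lattices are $\rho$-lattices, for $m>n/2$ we have $\bigl(\sum_{x_k}|g(x_k)|^2\bigr)^{1/2}\le C_1\rho^{-n/2}\|g\|_{H^m(\bf M)}$. Applying this with $g=\partial^\beta(\psi_\nu f)$, and using that $\partial^\beta$ followed by multiplication by $\psi_\nu$ maps $H^{|\beta|+m}$ boundedly into $H^m$, together with the norm equivalence (\ref{SnormE}) relating the Sobolev norm to $\|(I+\mathcal{L})^{|\beta|/2}f\|_{L_2}$ (after summing over $\nu$ via the definition (\ref{Sobnorm}) of the Sobolev norm), converts everything into the stated bound $C(K)\sum_{1\le|\beta|\le K}\rho^{n/2+|\beta|}\|(I+\mathcal{L})^{|\beta|/2}f\|_{L_2(\bf M)}$, with $K$ chosen large enough that the implicit Sobolev order $m+K$ exceeds $n/2$ throughout. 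The main obstacle is the bookkeeping of the coordinate changes — ensuring that the Taylor estimates, carried out chart by chart, assemble into an intrinsic bound with constants independent of $\rho$ and of the particular lattice; this is handled by the uniform finite multiplicity $N_{\bf M}$ of the cover and the fact that finitely many fixed charts and a fixed partition of unity are used, so all the local constants can be taken uniform. A secondary technical point is justifying that one may take $K$ "sufficiently large" relative to $n$ so that the sampling inequality (\ref{Sob11}) applies to each $\partial^\beta(\psi_\nu f)$; this only requires $m$ in that theorem to satisfy $m>n/2$, which is always available.
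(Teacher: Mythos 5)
Your decomposition of the error into Voronoi cells and the Taylor expansion of $\psi_{\nu}f$ about each node is exactly the paper's starting point, and your bookkeeping up to the intermediate bound
$\left|\sum_{\nu}\sum_{k}\psi_{\nu}f(x_{k})\mu\mathcal{M}_{k,\rho}-\int_{\bf M}f\right|\leq C\sum_{\nu}\sum_{1\leq|\beta|\leq K}\rho^{n/2+|\beta|}\left(\sum_{x_{k}}|\partial^{\beta}(\psi_{\nu}f)(x_{k})|^{2}\right)^{1/2}$
is correct. The gap is in your last step. Theorem \ref{T1} gives $\left(\sum_{k}|g(x_{k})|^{2}\right)^{1/2}\leq C_{1}\rho^{-n/2}\|g\|_{H^{m}({\bf M})}$ with $m>n/2$; applied to $g=\partial^{\beta}(\psi_{\nu}f)$ it (i) reintroduces the factor $\rho^{-n/2}$, which exactly cancels the $\rho^{n/2}$ you had just gained from Cauchy--Schwarz against $\mu(\mathcal{M}_{k,\rho})$, and (ii) produces a Sobolev norm of order $|\beta|+m$, i.e.\ after (\ref{SnormE}) and elliptic regularity the quantity $\|(I+\mathcal{L})^{(|\beta|+m)/2}f\|_{L_{2}({\bf M})}$, not $\|(I+\mathcal{L})^{|\beta|/2}f\|_{L_{2}({\bf M})}$. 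So your argument yields at best a bound by $\sum_{\beta}\rho^{|\beta|}\|(I+\mathcal{L})^{(|\beta|+m)/2}f\|_{L_{2}}$ with $m>n/2$, which is not (\ref{closeness}); in particular it is too weak for the way (\ref{closeness}) is used in Theorem \ref{cubformula}, where the factor $\rho^{n/2}$ together with the matching of the power of $(I+\mathcal{L})$ to the power of $\rho$ is precisely what makes the correction vector $z$ of size $O\left(\rho^{n}\cdot\rho\sqrt{1+\omega}\right)$, one order smaller than the weights $\mu(\mathcal{M}_{k,\rho})\asymp\rho^{n}$. Moreover the step cannot be repaired within your framework: for a general smooth $f$ the nodal sum $\left(\sum_{k}|\partial^{\beta}(\psi_{\nu}f)(x_{k})|^{2}\right)^{1/2}$ involves $\asymp\rho^{-n}$ terms and therefore cannot be bounded by $C\|(I+\mathcal{L})^{|\beta|/2}f\|_{L_{2}}$ uniformly in $\rho$ (a fixed $f$ with nonvanishing $\partial^{\beta}(\psi_{\nu}f)$ already forces a factor $\rho^{-n/2}$).

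The paper's proof diverges from yours at exactly this point: it never forms a global $\ell^{2}$ sum of nodal values and never invokes Theorem \ref{T1}. Instead, each Taylor coefficient $\partial^{\alpha}(\psi_{\nu}f)(x_{k})$ is estimated by the \emph{local} Sobolev-embedding inequality (\ref{basicineq}) with $p=1$ on the small ball $B(x_{k},\rho)$, and the $L_{1}$ norms there are converted to $L_{2}(B(x_{k},\rho))$ norms by the Schwarz inequality (gaining $\rho^{n/2}$); the integral-form remainder is treated the same way in spherical coordinates. The effect is that every additional derivative carries a matching power of $\rho$, and only local norms $\|\partial^{\beta}(\psi_{\nu}f)\|_{L_{2}(B(x_{k},\rho))}$ over balls of bounded overlap appear; these are then assembled, via the definition (\ref{Sobnorm}) of the Sobolev norm and elliptic regularity of $I+\mathcal{L}$, into the norms $\|(I+\mathcal{L})^{|\beta|/2}f\|_{L_{2}}$ of the statement, which is how the right-hand side of (\ref{closeness}) keeps the low order $|\beta|/2$. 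Be aware that the summation over the $\asymp\rho^{-n}$ cells of these local norms is the genuinely delicate point of the whole argument (an $\ell^{1}$ sum of local $L_{2}$ norms in general costs a factor $\rho^{-n/2}$), so the precise mechanism by which the $\rho^{n/2}$ survives matters; replacing it by the global sampling inequality, as you propose, is exactly what destroys it.
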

\begin{proof}
 We start with the Taylor series
\begin{equation}
\psi_{\nu}f(y)-\psi_{\nu}f(x_{k})=\sum_{1\leq |\alpha| \leq m-1} \frac{1}{\alpha
!}\partial^{\alpha}(\psi_{\nu}f)(x_{k})(x_{k}-y) ^{\alpha}+\label{Taylor}
\end{equation}
$$
\sum_{|\alpha|=m}\frac{1}{\alpha !}\int_{0}^{\tau}t^{m-1}\partial
^{\alpha}\psi_{\nu}f(x_{k}+t\theta)\theta^{\alpha}dt,
$$
where $ f\in
C^{\infty}(\mathbb{R}^{d}), \   y\in B(x_{k},\rho/2),\  x=(x^{(1)},...,x^{(d)}),\  y=(y^{(1)},...,y^{(d)}),\  \alpha=(
\alpha_{1},...,\alpha_{d}),\>\>(x-y)^{\alpha}=(x^{(1)}-y^{(1)})^{\alpha_{1}}...
(x^{(d)}-y^{(d)})^{\alpha_{d}},\  \tau=\|x-x_{i}\|,\
\theta=(x-x_{i})/ \tau.
$

We are going to use the following inequality, which is essentially the Sobolev imbedding theorem:
\begin{equation}
|(\psi_{\nu}f)(x_{k})|\leq C_{n,m}\sum_{0\leq j \leq m}
\rho^{j-n/p}\|(\psi_{\nu}f)\| _{W^{j}_{p}(B(x_{k},\rho))},\  1\leq p\leq \infty,\>\>
\label{basicineq}
\end{equation}
where $ m>n/p,$ and the functions $\{\psi_{\nu}\}$  form the partition
of unity which we used to define the Sobolev norm in
(\ref{Sobnorm}).
Using (\ref{basicineq}) for $p=1$ we obtain that the following inequality

\begin{equation}\left |\sum_{1\leq|\alpha|\leq m-1} \frac{1}{\alpha
!}\partial^{\alpha}(\psi_{\nu}f)(x_{k})(x_{k}-y) ^{\alpha}\right |\leq 
\end{equation}
$$
C(n,m)\rho^{|\alpha|}\sum_{1\leq|\alpha|\leq m} \sum_{0\leq |\gamma|\leq  m}\rho^{|\gamma|-n}\|\partial
^{\alpha+\gamma}(\psi_{\nu}f)\|_{L_{1}(B(x_{k},\rho))},\ \ m>n,\label{interm}
$$
 for some $C(n,m)\geq 0$. Since, by the Schwarz inequality,
 \begin{equation}
 \|\partial
^{\alpha}(\psi_{\nu}f)\|_{L_{1}(B(x_{k},\rho))}\leq C(n)\rho^{n/2}
 \|\partial
^{\alpha}(\psi_{\nu}f)\|_{L_{2}(B(x_{k},\rho))}
 \end{equation}
 we obtain the following estimate, which holds for small  $\rho$:
 \begin{equation}
\sup_{y\in B(x_{k},\rho)}\left | \sum_{1\leq|\alpha|\leq m-1} \frac{1}{\alpha
!}\partial^{\alpha}(\psi_{\nu}f)(x_{k})(x_{k}-y) ^{\alpha}\right |\leq 
\end{equation}
$$
C(n,m)\sum_{1\leq|\beta|\leq 2m}\rho^{|\beta|-n/2} \|\partial
^{\beta}(\psi_{\nu}f)\|_{L_{2}(B(x_{k},\rho))},\  m>n.
$$
Next, using the Schwarz inequality and the
assumption that
 $m>n=dim\ {\bf M},\  |\alpha|=m,$ we obtain
$$
\left |\int_{0}^{\tau}t^{m-1}\partial
^{\alpha}\psi_{\nu}f(x_{k}+t\theta)\theta^{\alpha}dt\right |\leq
$$
$$\int_{0}^{\tau}t^{m-n/2-1/2}|t^{n/2-1/2}\partial^{\alpha}
\psi_{\nu}f(x_{k}+t\theta)|dt\leq
$$
$$C\left(\int_{0}^{\tau}t^{2m-n-1}\right)^{1/2}\left(\int_{0}
^{\tau} t^{n-1}|\partial^{\alpha}\psi_{\nu}f(x_{k}+t\theta)| ^{2}dt\right)
^{1/2}\leq
$$
$$C\tau^{m-n/2}\left(\int_{0}^{\tau}t^{n-1}|\partial^{\alpha}
\psi_{\nu}f(x_{k}+t\theta)|^{2}dt\right)^{1/2}, \ m>n.
$$
We square this inequality, and integrate both sides of it over the ball
$B(x_{k},\rho/2)$, using the spherical coordinate system
$(\tau, \theta).$  We find

$$
\int_{B(x_{k},\rho)}\left |\int_{0}^{\tau}t^{m-1}\partial
^{\alpha}\psi_{\nu}f(x_{k}+t\theta)\theta^{\alpha}dt\right |^{2}\tau^{n-1}d\theta
d\tau\leq
$$
$$
C(m,n)\int_{0}^{\rho/2}\tau^{2m-n}\int_{0}^{2\pi}
\left |\int_{0}^{\tau}t^{n-1}\partial
^{\alpha}(\psi_{\nu}f)(x_{k}+t\theta)\theta^{\alpha}dt\right |^{2}\tau^{n-1}d\theta
d\tau\leq
$$
$$C(m,n)\int_{0}^{\rho/2}t^{n-1}\left(\int_{0}^{2\pi}\int_{0}^{\rho/2}
\tau^{2m-n}\left |\partial^{\alpha}(\psi_{\nu}f)(x_{k}+t\theta)\right |^{2}
\tau^{n-1}d\tau
d\theta\right)dt\leq
$$
$$
C_{m,n}\rho^{2|\alpha|}\|\partial^{\alpha}
(\psi_{\nu}f)\|^{2}_{L_{2}(B(x_{k},\rho))},
$$
where $\tau=\|x-x_{k}\|\leq\rho/2, \  m=|\alpha|>n.$ 
Let $\left\{\mathcal{M}_{k,\rho}\right\}$ be the Voronoi  cover of ${\bf M}$ which is associated 
with a $\rho$-lattice $M_{\rho}$ (see Lemma \ref{mkrho}). 
From here we obtain
\begin{equation}
\int _{\mathcal{M}_{k}}\left | \psi_{\nu}f(y)-\psi_{\nu}f(x_{k})\right |dx\leq  
\end{equation}
$$
C(n,m)\sum_{1\leq|\beta|\leq 2m}\rho^{|\beta|+n/2} \|\partial
^{\beta}(\psi_{\nu}f)\|_{L_{2}(B(x_{k},\rho))}
+
$$
$$
\sum_{|\alpha|=m}\frac{1}{\alpha !}\int _{B(x_{k}, \rho)}  \left  |\int_{0}^{\tau}t^{m-1}\partial
^{\alpha}\psi_{\nu}f(x_{k}+t\theta)\theta^{\alpha}dt \right |\leq 
$$
$$
C(n,m)\sum_{1\leq|\beta|\leq 2m}\rho^{|\beta|+n/2} \|\partial
^{\beta}(\psi_{\nu}f)\|_{L_{2}(B(x_{k},\rho))}+
$$
$$
\rho^{n/2}
\sum_{|\alpha|=m}\frac{1}{\alpha !}\left (\int _{B(x_{k}, \rho)}  \left  |\int_{0}^{\tau}t^{m-1}\partial
^{\alpha}\psi_{\nu}f(x_{k}+t\theta)\theta^{\alpha}dt \right |^{2}\tau^{n-1}d\tau d\theta\right)^{1/2}\leq 
$$
$$
C(n,m)\sum_{1\leq|\beta|\leq 2m}\rho^{|\beta|+n/2} \|\partial
^{\beta}(\psi_{\nu}f)\|_{L_{2}(B(x_{k},\rho))}.
$$

Next, we have the following inequalities
$$
\sum_{\nu}\sum_{x_{k}\in M_{\rho}}\psi_{\nu}f(x_{k})\  \mu \mathcal{M}_{k,\rho}-\int_{{\bf M}}f(x)dx=
$$
$$
-\sum_{\nu}\left(\sum_{k} \int _{\mathcal{M}_{k,\rho}}\psi_{\nu}f(x)dx-\sum_{k}\psi_{\nu}f(x_{k})\  \mu \mathcal{M}_{k,\rho} \right )\leq
$$
\begin{equation}
\sum_{\nu}\sum_{k}\left | \int _{\mathcal{M}_{k,\rho}}\psi_{\nu}f(x)-\psi_{\nu}f(x_{k})\  \mu \mathcal{M}_{k,\rho}dx \right |
\end{equation}
$$
\leq C(n,m)\rho^{n/2}\sum_{\nu}\sum_{x_{k}\in M_{\rho}}\sum_{1\leq |\beta|\leq 2m}\rho^{|\beta|} \|\partial
^{\beta}(\psi_{\nu}f)\|_{L_{2}(B(x_{k},\rho))},
$$
where $\ m>n.  $
Using the definition of the Sobolev norm and elliptic regularity of the operator $I+\mathcal{L}$, 
where $I$ is the identity operator on $L_{2}({\bf M})$,  we obtain the inequality (\ref{closeness}).
\end{proof}

Now we are going to prove existence of cubature formulas which are exact on $ {\mathbf E}_{\omega}({\bf M})$,
and have positive coefficients of the "right" size.

\begin{thm} 
\label{cubformula}
There exists  a  positive constant $a_{0}$,    such  that if  $\rho=a_{0}(\omega+1)^{-1/2}$, then
for any $\rho$-lattice $M_{\rho}=\{x_{k}\}$, there exist strictly positive coefficients $\mu_{x_{k}}>0, 
 \  x_{k}\in M_{\rho}$, \  for which the following equality holds for all functions in $ {\mathbf E}_{\omega}({\mathcal L})$:
\begin{equation}
\label{cubway}
\int_{{\bf M}}fdx=\sum_{x_{k}\in M_{\rho}}\mu_{x_{k}}f(x_{k}).
\end{equation}
Moreover, there exists constants  $\  c_{1}, \  c_{2}, $  such that  the following inequalities hold:
\begin{equation}
c_{1}\rho^{n}\leq \mu_{x_{k}}\leq c_{2}\rho^{n}, \ n=dim\   {\bf M}.
\end{equation}
\end{thm}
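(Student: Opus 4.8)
The plan is to combine the approximate cubature formula (\ref{closeness}) with a functional-analytic perturbation argument carried out inside the finite-dimensional space $\mathbf{E}_{\omega}(\mathcal{L})$. First I would set up the "crude" positive functional: given a $\rho$-lattice $M_{\rho}=\{x_{k}\}$ with its Voronoi cover $\{\mathcal{M}_{k,\rho}\}$, define $Q_{0}(f)=\sum_{x_{k}}\mu(\mathcal{M}_{k,\rho})\,f(x_{k})$, which already has strictly positive weights $\mu(\mathcal{M}_{k,\rho})$ of the right size $\asymp\rho^{n}$ by Lemma \ref{mkrho}. (The partition of unity sum $\sum_{\nu}\psi_{\nu}f(x_{k})=f(x_{k})$ collapses, so (\ref{closeness}) directly bounds $|Q_{0}(f)-\int_{\mathbf M}f|$.) Restricting attention to $f\in\mathbf{E}_{\omega}(\mathcal{L})$ and invoking the Bernstein inequality (\ref{BI}) to replace $\|(I+\mathcal{L})^{|\beta|/2}f\|$ by $(1+\omega)^{|\beta|/2}\|f\|_{L_{2}}$, the error estimate (\ref{closeness}) becomes
\begin{equation}
\left|Q_{0}(f)-\int_{\mathbf M}f\,dx\right|\leq C(K)\sum_{|\beta|=1}^{K}\rho^{n/2+|\beta|}(1+\omega)^{|\beta|/2}\|f\|_{L_{2}(\mathbf M)},\qquad f\in\mathbf{E}_{\omega}(\mathcal{L}).
\end{equation}
With the choice $\rho=a_{0}(\omega+1)^{-1/2}$ every product $\rho^{|\beta|}(1+\omega)^{|\beta|/2}=a_{0}^{|\beta|}$, so the right-hand side is at most $C(K)\rho^{n/2}\sum_{|\beta|=1}^{K}a_{0}^{|\beta|}\|f\|_{L_{2}}\leq \varepsilon\,\rho^{n/2}\|f\|_{L_{2}(\mathbf M)}$ with $\varepsilon$ as small as we like, by first fixing $K$ and then taking $a_{0}$ small.

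Next I would reformulate this as a statement about a linear operator on $\mathbf{E}_{\omega}(\mathcal{L})$. Both $f\mapsto\int_{\mathbf M}f$ and $f\mapsto Q_{0}(f)$ are linear functionals on the finite-dimensional Hilbert space $\mathbf{E}_{\omega}(\mathcal{L})$, hence are represented by inner products against elements of that space; more usefully, consider the positive semidefinite operator $T_{0}:\mathbf{E}_{\omega}\to\mathbf{E}_{\omega}$ defined via the sampling frame, $\langle T_{0}f,g\rangle=\sum_{x_{k}}\mu(\mathcal{M}_{k,\rho})f(x_{k})\overline{g(x_{k})}$ (using $f(x_{k})=\langle f,\vartheta_{k}\rangle$ from Section 3), together with the Gram/integration operator $T$ defined by $\langle Tf,g\rangle=\int_{\mathbf M}f\bar g$; on $\mathbf{E}_{\omega}$ the latter is just the identity. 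The Plancherel–Polya inequalities of Theorem \ref{PP2}, applied with this $\rho$, show that $\sum_{x_{k}}\mu(\mathcal{M}_{k,\rho})|f(x_{k})|^{2}\asymp\rho^{n}\cdot\rho^{-n}\|f\|_{L_{2}}^{2}=\|f\|_{L_{2}}^{2}$, i.e. $T_{0}$ is bounded above and below by fixed multiples of the identity on $\mathbf{E}_{\omega}$, independently of $\rho$ and the lattice. The real content is that one wants to perturb the \emph{weights} $\mu(\mathcal{M}_{k,\rho})$, keeping them positive and of size $\asymp\rho^{n}$, so that the functional becomes \emph{exactly} $\int_{\mathbf M}$ on $\mathbf{E}_{\omega}$.

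The key step — and the main obstacle — is this exact-interpolation argument. Consider the linear map $\Phi:\mathbb{R}^{N}\to\mathbf{E}_{\omega}(\mathcal{L})^{*}$ sending a weight vector $(w_{k})$ to the functional $f\mapsto\sum_{k}w_{k}f(x_{k})$; by the upper Plancherel–Polya bound $\Phi$ is surjective (its adjoint is the injective sampling map $f\mapsto(f(x_{k}))$, injective precisely because the lower bound in Theorem \ref{PP2} holds). Hence the target functional $\int_{\mathbf M}$ lies in the range of $\Phi$; we must show it is hit by a weight vector that is a \emph{small} perturbation of the positive vector $w^{0}_{k}=\mu(\mathcal{M}_{k,\rho})$. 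Write the desired weights as $w_{k}=w^{0}_{k}+v_{k}$; then $\Phi(v)=\int_{\mathbf M}-Q_{0}$, a functional of norm $\leq\varepsilon\rho^{n/2}$ on $(\mathbf{E}_{\omega},\|\cdot\|_{L_{2}})$ by the estimate above. Choosing $v$ to be the minimal-$\ell^{2}(w^{0})$-norm (weighted least-squares) solution, the lower Plancherel–Polya bound gives $\big(\sum_{k}w^{0}_{k}{}^{-1}|v_{k}|^{2}\big)^{1/2}\leq C\rho^{n/2}\cdot\rho^{n/2}\cdot\varepsilon\rho^{-n/2}\cdot\ldots$ — more cleanly, $\sum_{k}|v_{k}|/w^{0}_{k}\leq$ (Cauchy–Schwarz) $\leq (\sum_{k}w^{0}_{k})^{1/2}(\sum_{k}|v_{k}|^{2}/w^{0}_{k})^{1/2}\leq C\,\mathrm{Vol}(\mathbf M)^{1/2}\cdot C'\varepsilon$, which is $<1/2$ once $a_{0}$ is small enough. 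This forces $|v_{k}|<\tfrac12 w^{0}_{k}$ for every $k$, so $w_{k}=w^{0}_{k}+v_{k}\in(\tfrac12 w^{0}_{k},\tfrac32 w^{0}_{k})$, hence strictly positive and still sandwiched between $c_{1}\rho^{n}$ and $c_{2}\rho^{n}$ by Lemma \ref{mkrho}. Setting $\mu_{x_{k}}=w_{k}$ yields (\ref{cubway}). The delicate point to watch is that the constant $\varepsilon$ (equivalently $a_{0}$) must be chosen after $K$ and after the Plancherel–Polya constants $C_{1},C_{2}$, but crucially \emph{before} and independently of the particular lattice $M_{\rho}$ and of $\omega$; all the estimates invoked (Theorem \ref{PP2}, Lemma \ref{mkrho}, the bound (\ref{closeness})) are uniform in exactly this sense, so the argument closes.
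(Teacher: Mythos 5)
Your overall architecture is the same as the paper's: start from the strictly positive Voronoi weights $w^{0}_{k}=\mu(\mathcal{M}_{k,\rho})\asymp\rho^{n}$ (Lemma \ref{mkrho}), use (\ref{closeness}) together with the Bernstein inequality and the choice $\rho=a_{0}(\omega+1)^{-1/2}$ to see that the error functional $f\mapsto\int_{\bf M}f-Q_{0}(f)$ is small on ${\mathbf E}_{\omega}(\mathcal L)$, represent that error by a correction vector $v$ supported on the same nodes, and conclude positivity because the correction is small compared with $\rho^{n}$. This is exactly the paper's proof, except that the paper works in the unweighted $\ell^{2}$ geometry on the sampled image $V=S(R_{\omega}(\mathcal L))$: the error estimate (\ref{clos-band}) is recorded against $\left(\sum_{k}|f(x_{k})|^{2}\right)^{1/2}$, so the correction $z=v-Pw$ satisfies $\|z\|_{\ell^{2}}\leq C_{2}\rho^{n}\left(\rho\sqrt{1+\omega}\right)=C_{2}a_{0}\rho^{n}$, and componentwise smallness at scale $\rho^{n}$ follows at once from $\sup_{k}|z_{k}|\leq\|z\|_{\ell^{2}}$.

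There is, however, a genuine flaw in your last step, and it sits precisely at this componentwise stage. The Cauchy--Schwarz inequality you display is misapplied: $\left(\sum_{k}w^{0}_{k}\right)^{1/2}\left(\sum_{k}|v_{k}|^{2}/w^{0}_{k}\right)^{1/2}$ bounds $\sum_{k}|v_{k}|$, not $\sum_{k}|v_{k}|/w^{0}_{k}$; to bound the latter you would need the factor $\left(\sum_{k}1/w^{0}_{k}\right)^{1/2}\asymp\rho^{-n}$ (since $|M_{\rho}|\asymp\rho^{-n}$ and $w^{0}_{k}\asymp\rho^{n}$), which ruins the estimate. And even the corrected inequality $\sum_{k}|v_{k}|\leq C\varepsilon\rho^{n/2}\mathrm{Vol}({\bf M})^{1/2}$ is too weak by a factor $\rho^{-n/2}$ to force $|v_{k}|<\tfrac{1}{2}w^{0}_{k}\asymp\rho^{n}$ uniformly in $\omega$, since $\varepsilon$ is fixed while $\rho\to 0$. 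Fortunately the quantities you already introduced close the gap in one line, with no summation over $k$ at all: your weighted least-squares choice of $v$, combined with the right-hand inequality of (\ref{completePlPo100}) and $w^{0}_{k}\geq a_{1}\rho^{n}$, gives the dual-norm bound $\left(\sum_{k}|v_{k}|^{2}/w^{0}_{k}\right)^{1/2}\leq C\varepsilon\rho^{n/2}$, whence componentwise $|v_{k}|\leq C\varepsilon\rho^{n/2}\sqrt{w^{0}_{k}}\leq C\sqrt{a_{2}}\,\varepsilon\rho^{n}<\tfrac{1}{2}w^{0}_{k}$ once $a_{0}$ (hence $\varepsilon$) is small enough; equivalently, pass to the unweighted $\ell^{2}$ norm as the paper does and use $\sup_{k}|v_{k}|\leq\|v\|_{\ell^{2}}\leq Ca_{0}\rho^{n}$. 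With that repair your argument coincides with the paper's; the uniformity in the lattice and in $\omega$ that you flag at the end is indeed guaranteed by Theorem \ref{PP2}, Lemma \ref{mkrho} and (\ref{closeness}).
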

\begin{proof}
By using the Bernstein inequality, and our Plancherel-Polya inequalities  (\ref{completePlPo100}), and assuming that 
\begin{equation}
\rho<\frac{1}{2\sqrt{\omega+1}}
\end{equation}
we obtain from (\ref{closeness}) the following inequality:
\begin{equation}
\label{clos-band}
\left|\sum_{\nu}\sum_{x_{k}\in M_{\rho}}\psi_{\nu}f(x_{k})\  \mu \mathcal{M}_{k,\rho}-\int_{{\bf M}}f(x)dx\right|\leq
C_{1}\rho^{n/2}\sum_{|\beta|=1}^{K}\left(\rho\sqrt{1+\omega }\right)^{|\beta|}\|f\|_{L_{2}(\bold M)}\leq 
$$
$$
C_{2}\rho^{n}\left(\rho\sqrt{1+\omega}\right)\left(\sum_{x_{k}\in M_{\rho}} |f(x_{k})|^{2}\right)^{1/2},
\end{equation}
where $C_{2}$ is independent of  $\rho\in \left(0, (2\sqrt{\omega+1}\right)^{-1})$
and the $\rho$-lattice $M_{\rho}$.

Let $R_{\omega}(\mathcal L)$ denote the space of real-valued functions in 
${\mathbf E}_{\omega}(\mathcal L)$.  Since the eigenfunctions of ${\mathcal L}$ may be taken to be real,
we have ${\mathbf E}_{\omega}(\mathcal L) = R_{\omega}(\mathcal L) + iR_{\omega}(\mathcal L)$, so 
it is enough to show that (\ref{cubway}) holds for all $f \in R_{\omega}(\mathcal L)$.

Consider the sampling operator
$$
S: f\rightarrow \{f(x_{k})\}_{x_{k}\in M_{\rho}},
$$
which maps  $R_{\omega}(\mathcal L)$ into the space  $\mathbb{R}^{|{\mathcal M_{\rho}}|}$
with the $\ell^2$ norm.
Let $V = S(R_{\omega}(\mathcal L))$ be the image of  $R_{\omega}(\mathcal L)$ under $S$.
$V$ is a  subspace of $\mathbb{R}^{|{\mathcal M_{\rho}}|}$,
and we consider it with the induced $\ell^2$ norm.
If $u \in V$, denote the linear functional $y \to (y,u)$ on $V$ by $\ell_u$.
By our  Plancherel-Polya inequalities (\ref{completePlPo100})
,  the map 
$$
\{f(x_k)\}_{x_{k}\in M_{\rho}} \to \int _{\bf M}fdx
$$ 
is a well-defined linear functional on the finite dimensional space
$V$, and so equals $\ell_v$ for some $v \in V$, which may
or may not  have all components positive.  On the other hand, if $w$ is the vector with components $\{\mu({\mathcal M}_{k,\rho})\}, \ x_{k}\in M_{\rho}$, then $w$ might
not be in $V$, but it has all components positive and of the right size
$$
a_{1}\rho^{n}\leq \mu\left(\mathcal{M}_{k,\rho}\right)\leq a_{2}\rho^{n},
$$
for some  positive $a_{1},\  a_{2}$, independent  of $\rho$ and the lattice $M_{\rho}=\{x_{k}\}$.
Since, for any vector $u \in V$ the norm of $u$  is exactly the norm of the corresponding functional 
$\ell_u$,  inequality (\ref{clos-band})
tells us that 
\begin{equation}
\label{2}
\|Pw-v\| \leq \|w-v\| \leq C_2\rho^n \left(\rho\sqrt{1+\omega}\right),
\end{equation}
where  $P$ is the orthogonal projection onto $V$. Accordingly, if $z$ is the real vector $v-Pw$, then
\begin{equation}
\label{3}
v+(I-P)w = w + z ,
\end{equation}
where $\|z\| \leq C_2\rho^n \left(\rho\sqrt{1+\omega}\right)$.  Note, that all components of the vector $w$ 
 are of order $O(\rho^{n})$, while the order of $\|z\|$ is  $O(\rho^{n+1})$. Accordingly, if 
 $\rho\sqrt{1+\omega}$ is sufficiently small, then 
$\mu := w + z$ has all components positive and of the right size.  Since $\mu = v + (I-P)w$, the linear
functional $y \to (y,\mu)$ on $V$ equals $\ell_v$.  In other words, if the vector $\mu$ has components
 $\{\mu_{x_{k}}\}, \ x_{k}\in M_{\rho},$ then 
$$
\sum_{x_{k}\in M_{\rho}}f(x_{k})\mu_{x_{k}} = \int_{\bf M} f dx
$$ 
for all $f \in R_{\omega}(\mathcal L)$, and hence for all $f \in {\mathbf E}_{\omega}(\mathcal L)$, as desired.

\end{proof}

We obviously have the following result.

\begin{thm}

\begin{enumerate}

\item  There exists a $c_{0}=c_{0}(\bold M,\mathcal L)$  and for
   any $0\leq k\leq m, k,m\in \mathbb{N},$ there exists a constant $C_{k,m}>0$ such
   that if $M_{\rho}=\{x_{k}\}$ is a $\rho$-lattice  with
   $0<\rho\leq c_{0}\omega^{-1}$ then for the same weights $\{\mu_{x_{j}}\}$ as in (\ref{cubway})
\begin{equation}
\left |\int_{\bold M}f-
\sum_{x_{j}}f_{\omega}(x_{j})\mu_{x_{j}}\right |\leq\frac{C_{
k, m}}{\omega^{k}}\Omega_{m-k}\left(\mathcal{L}^{k}f, 1/\omega\right),
\end{equation}

\item For functions in $
\mathbf{B}_{2,\infty}^{\alpha}(\bold M)$ the following relation holds
\begin{equation}
\left |\int_{\bold M}f-\sum_{x_{j}}f_{\omega}(x_{j})\mu_{x_{j}}\right |=O(\omega^{-\alpha}),\>\>\>\omega\longrightarrow
\infty.
\end{equation}
where $f_{\omega}$ is the orthogonal projection of $f\in L_{2}(\bold M)$
onto ${\mathbf E}_{\omega}(\mathcal{L})$.

\end{enumerate}

\end{thm}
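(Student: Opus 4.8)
The plan is to mirror, word for word, the argument that produced the analogous estimate in Section 3 for the weights $\{\nu_j\}$, using this time the \emph{positive} weights $\{\mu_{x_j}\}$ furnished by Theorem \ref{cubformula}. The starting point is that (\ref{cubway}) is an \emph{exact} cubature formula on ${\mathbf E}_{\omega}(\mathcal L)$. Since $f_{\omega}$, the orthogonal projection of $f$ onto ${\mathbf E}_{\omega}(\mathcal L)$, lies in that space, exactness gives
\[
\sum_{x_{j}\in M_{\rho}} f_{\omega}(x_{j})\,\mu_{x_{j}} \;=\; \int_{{\bf M}} f_{\omega}\, dx ,
\]
so that
\[
\left|\int_{{\bf M}} f\, dx - \sum_{x_{j}\in M_{\rho}} f_{\omega}(x_{j})\,\mu_{x_{j}}\right|
\;=\; \left|\int_{{\bf M}}(f-f_{\omega})\, dx\right|
\;\leq\; \bigl(Vol({\bf M})\bigr)^{1/2}\,\|f-f_{\omega}\|_{L_{2}({\bf M})},
\]
the last step by the Cauchy--Schwarz inequality. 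Thus the cubature error is controlled entirely by the projection error $\|f-f_{\omega}\|_{L_{2}({\bf M})}$.

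For part (1), I then invoke the approximation estimate (\ref{mod-1}) from \cite{Pes09}, which bounds $\|f-f_{\omega}\|_{L_{2}({\bf M})}$ by $C_{k,m}\,\omega^{-k}\,\Omega_{m-k}\!\left(\mathcal L^{k}f, 1/\omega\right)$ for all $0\le k\le m$; absorbing the factor $(Vol({\bf M}))^{1/2}$ into the constant yields the claimed inequality. The only bookkeeping point is the admissible range of $\rho$: Theorem \ref{cubformula} produces the positive weights for $\rho=a_{0}(\omega+1)^{-1/2}$, but passing to a finer $\rho$-lattice with $0<\rho\le c_{0}\omega^{-1}$ only strengthens the lower Plancherel--Polya bound of Theorem \ref{PP2}, hence preserves both exactness of a cubature formula on ${\mathbf E}_{\omega}(\mathcal L)$ and the two-sided size bounds $c_{1}\rho^{n}\le \mu_{x_{k}}\le c_{2}\rho^{n}$ on the weights; so one may indeed take $c_{0}=c_{0}({\bf M},\mathcal L)$ as in the statement, exactly as was done for the weights $\{\nu_j\}$.

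For part (2), recall from \cite{Pes09} the characterization that $f\in \mathbf{B}_{2,\infty}^{\alpha}({\bf M})$ if and only if $\Omega_{m}(f,1/\omega)=O(\omega^{-\alpha})$ as $\omega\to\infty$, valid for any fixed $m>\alpha$. Choosing $k=0$ and such an $m$ in the inequality of part (1) gives
\[
\left|\int_{{\bf M}} f\, dx - \sum_{x_{j}\in M_{\rho}} f_{\omega}(x_{j})\,\mu_{x_{j}}\right|
\;\leq\; C_{0,m}\,\Omega_{m}(f,1/\omega) \;=\; O(\omega^{-\alpha}),
\]
which is the asserted rate.

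As for the main obstacle: there is essentially none beyond correctly quoting the two ingredients already established — exactness of (\ref{cubway}) on ${\mathbf E}_{\omega}(\mathcal L)$ (Theorem \ref{cubformula}) and the modulus-of-continuity estimate (\ref{mod-1}) — which is why the text can say ``We obviously have the following result.'' The one place that merits an explicit line of care is the assertion that exactness and the weight-size bounds persist when $\rho$ is shrunk to satisfy $\rho\le c_{0}\omega^{-1}$, and this I would justify via the monotonicity of the lower frame bound in Theorem \ref{PP2} as indicated above.
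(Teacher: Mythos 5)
Your proposal is correct and follows exactly the route the paper intends: the paper gives no explicit proof (``We obviously have the following result''), the result being the combination of the exactness of the positive cubature formula (\ref{cubway}) on ${\mathbf E}_{\omega}(\mathcal L)$ with the projection error estimate (\ref{mod-1}), precisely as was done in Section 3 for the weights $\{\nu_j\}$. Your extra remark about shrinking $\rho$ to $c_{0}\omega^{-1}$ while retaining exactness and the weight bounds addresses a point the paper itself glosses over, and your justification via the persistence of the Plancherel--Polya inequalities for finer lattices is the right one.
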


\section{Harmonic Analysis on Compact homogeneous manifolds}

We review some very basic notions of harmonic analysis on
compact homogeneous manifolds \cite{H3}, Ch. II.

 Let ${\bf M},\ dim \>{\bf M}=n,$ be a
compact connected $C^{\infty}$-manifold. One says  that a compact
Lie group $G$ effectively acts on ${\bf M}$ as a group of
diffeomorphisms if:

1)  every element $g\in G$ can be identified with a diffeomorphism
$$
g: {\bf M}\rightarrow {\bf M}
$$
of ${\bf M}$ onto itself and
$$
g_{1}g_{2}\cdot x=g_{1}\cdot(g_{2}\cdot x),\ g_{1}, g_{2}\in G,\
x\in {\bf M},
$$
where $g_{1}g_{2}$ is the product in $G$ and $g\cdot x$ is the
image of $x$ under $g$,

2) the identity $e\in G$ corresponds to the trivial diffeomorphism
\begin{equation}
e\cdot x=x,
\end{equation}

3) for every $g\in G,\ g\neq e,$ there exists a point $x\in {\bf M}$ such
that $g\cdot x\neq x$.

\bigskip

A group $G$ acts on ${\bf M}$ \textit{transitively} if in addition to
1)- 3) the following property holds:

4) for any two points $x,y\in {\bf M}$ there exists a diffeomorphism
$g\in G$ such that
$$
g\cdot x=y.
$$

A \textit{homogeneous} compact manifold ${\bf M}$ is a
$C^{\infty}$-compact manifold on which a compact
Lie group $G$ acts transitively. In this case ${\bf M}$ is necessarily of the form $G/K$,
where $K$ is a closed subgroup of $G$. The notation $L_{2}({\bf M}),
$ is used for the usual Banach  spaces
$L_{2}({\bf M},dx)$, where $dx$ is an invariant
measure.

Every element $X$ of the (real) Lie algebra of $G$ generates a vector
field on ${\bf M}$, which we will denote by the same letter $X$. Namely,
for a smooth function $f$ on ${\bf M}$ one has
$$
 Xf(x)=\lim_{t\rightarrow 0}\frac{f(\exp tX \cdot x)-f(x)}{t}
 $$
for every $x\in {\bf M}$. In the future we will consider on ${\bf M}$ only
such vector fields. The translations along integral curves of such
vector fields $X$ on ${\bf M}$  can be identified with a one-parameter
group of diffeomorphisms of ${\bf M}$, which is usually denoted as $\exp
tX, -\infty<t<\infty$. At the same time, the one-parameter group
$\exp tX, -\infty<t<\infty,$ can be treated as a strongly
continuous one-parameter group of operators acting on the space $L_{2}({\bf M})$.  These operators act on functions according to the
formula
$$
f\rightarrow f(\exp tX\cdot x), \        t\in \mathbb{R}, \
  f\in L_{2}({\bf M}),\        x\in {\bf M}.
$$
 The
generator of this one-parameter group will be denoted by $D_{X}$,
and the group itself will be denoted by
$$
e^{tD_{X}}f(x)=f(\exp tX\cdot x),\       t\in \mathbb{R}, \
     f\in L_{2}({\bf M}), \       x\in {\bf M}.
$$

According to the general theory of one-parameter groups in Banach
spaces,  the operator $D_{X}$ is a closed
operator on every $L_{2}({\bf M})$.  

If $\textbf{g}$ is the Lie algebra of a compact Lie group $G$ then
(\cite{H3}, Ch.\ II,) it is a direct sum
$\textbf{g}=\textbf{a}+[\textbf{g},\textbf{g}]$, where
$\textbf{a}$ is the center of $\textbf{g}$, and
$[\textbf{g},\textbf{g}]$ is a semi-simple algebra. Let $Q$ be a
positive-definite quadratic form on $\textbf{g}$ which, on
$[\textbf{g},\textbf{g}]$, is opposite to the Killing form. Let
$X_{1},...,X_{d}$ be a basis of
$\textbf{g}$, which is orthonormal with respect to $Q$.
 Since the form $Q$ is $Ad(G)$-invariant, the operator
$$
-X_{1}^{2}-X_{2}^{2}-\    ... -X_{d}^{2},    \ d=dim\ G
$$
is a bi-invariant operator on $G$. This implies in particular that
the
   corresponding operator on $L_{2}({\bf M})$
\begin{equation}
\mathcal{L}=-D_{1}^{2}- D_{2}^{2}- ...- D_{d}^{2}, \>\>\>
       D_{j}=D_{X_{j}}, \        d=dim \ G,\label{Laplacian}
\end{equation}
\textit{commutes} with all operators $D_{j}=D_{X_{j}}$. This operator
$\mathcal{L}$, which is usually called the Laplace operator, is elliptic, and is
involved in most of the constructions and results of our paper.

In the rest of the paper, the notation $\mathbb{D}=\{D_{1},...,
D_{d}\},\>\>\> d=dim \ G,$ will be used for the differential operators
on $L_{2}({\bf M}),$ which are involved in the
formula (\ref{Laplacian}).

There are 
situations in which the operator $\mathcal{L}$ is, or is proportional to, the
Laplace-Beltrami operator  of an invariant metric on ${\bf M}$. This
happens for example, if ${\bf M}$ is a $n$-dimensional torus, a compact semi-simple
Lie group, or a compact symmetric space of rank one.

\section{On the product of eigenfunctions of the Casimir operator $\mathcal{L}$ on compact
 homogeneous manifolds}

In this section, we will use the assumption that ${\bf M}$ is a compact homogeneous manifold,
and that ${\mathcal L}$ is the operator of (\ref{Laplacian}), in an essential way.

\begin{thm}
\label{prodthm}
If ${\bf M}=G/K$ is a compact homogeneous manifold and $\mathcal{L}$
is defined as in (\ref{Laplacian}), then for any $f$ and $g$ belonging
to ${\mathbf E}_{\omega}(\mathcal{L})$,  their product $fg$ belongs to
${\mathbf E}_{4d\omega}(\mathcal{L})$, where $d$ is the dimension of the
group $G$.

\end{thm}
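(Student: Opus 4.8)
The plan is to exploit the algebraic structure of the Casimir operator $\mathcal L = -\sum_{j=1}^d D_j^2$ together with the fact that each $D_j$ is a first-order differential operator, hence satisfies the Leibniz rule $D_j(fg) = (D_jf)g + f(D_jg)$. First I would record the key product estimate: if $f,g$ are eigenfunctions (or, more generally, lie in ${\mathbf E}_\omega(\mathcal L)$), I want to bound $\|\mathcal L^{N}(fg)\|$ in terms of $\omega$ and $N$. Expanding $\mathcal L(fg) = -\sum_j D_j^2(fg) = -\sum_j\left[(D_j^2f)g + 2(D_jf)(D_jg) + f(D_j^2g)\right]$, one sees that $\mathcal L(fg)$ is a sum of terms of the form $(\text{product of }D\text{'s applied to }f)\cdot(\text{product of }D\text{'s applied to }g)$ where the total number of $D$'s is $2$. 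Iterating, $\mathcal L^N(fg)$ is a linear combination of terms $(D^{\alpha}f)(D^{\beta}g)$ with $|\alpha|+|\beta| = 2N$, where $D^{\alpha}$ denotes a word of length $|\alpha|$ in the $D_j$'s. The number of such terms, counted with multiplicity, is at most $d^{2N}\cdot 4^N$ or so — this combinatorial bookkeeping is where I'd be most careful.

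The second ingredient is a Bernstein-type bound controlling $\|D^{\alpha}f\|_{L_2({\bf M})}$ for $f\in{\mathbf E}_\omega(\mathcal L)$. Since the $D_j$ commute with $\mathcal L$, each $D_j$ preserves ${\mathbf E}_\omega(\mathcal L)$, and one has the operator inequality $\sum_j\|D_jf\|^2 = \langle \mathcal Lf,f\rangle \le \omega\|f\|^2$ for $f\in{\mathbf E}_\omega(\mathcal L)$; more generally, for any word $D^{\alpha}$ of length $|\alpha|$, one gets $\|D^{\alpha}f\|_{L_2({\bf M})} \le \omega^{|\alpha|/2}\|f\|_{L_2({\bf M})}$ by splitting $|\alpha|$ into a $\mathcal L$-power plus at most one extra factor and iterating (here one uses $\|\mathcal L^{s}f\|\le\omega^{s}\|f\|$ from (\ref{BI}) together with $\sup_{\|f\|=1, f\in{\mathbf E}_\omega}\|D_jf\| \le \sqrt\omega$). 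Combining with the Cauchy–Schwarz / sup-norm pairing $\|(D^{\alpha}f)(D^{\beta}g)\|_{L_2}\le \|D^{\alpha}f\|_{L_\infty}\|D^{\beta}g\|_{L_2}$ — or, to stay in $L_2$ only, by bootstrapping through a Sobolev embedding on the compact manifold ${\bf M}$ — one obtains $\|\mathcal L^N(fg)\|_{L_2({\bf M})} \le (4d\omega)^{N}\|f\|_{\#}\|g\|_{\#}$ for suitable norms, so that $fg$ satisfies the Bernstein inequality with bound $4d\omega$.

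Finally I would invoke the converse characterization of band-limited spaces: a function $h\in L_2({\bf M})$ lies in ${\mathbf E}_{\Lambda}(\mathcal L)$ if and only if $\|\mathcal L^N h\|_{L_2({\bf M})}^{1/N}$ stays bounded by $\Lambda$ as $N\to\infty$ (equivalently, $\limsup_N \|\mathcal L^N h\|^{1/N} \le \Lambda$ forces the spectral measure of $h$ to be supported in $[0,\Lambda]$, since on the orthogonal complement of ${\mathbf E}_{\Lambda}$ the operator $\mathcal L$ is bounded below by any $\lambda_j>\Lambda$, making $\|\mathcal L^N h\|^{1/N}$ grow past $\Lambda$). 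Since $fg$ is smooth (product of smooth eigenfunction-combinations), the estimate $\|\mathcal L^N(fg)\| \le C\,(4d\omega)^N$ then yields $fg\in{\mathbf E}_{4d\omega}(\mathcal L)$. The main obstacle is the bookkeeping in the second paragraph: making precise that applying $\mathcal L$ a total of $N$ times to $fg$ produces a controlled number ($\le (4d)^N$, roughly) of split terms each obeying the right per-term bound, so that the constant in front is subexponential in $N$ and does not spoil the extraction of the spectral radius $4d\omega$. One must also take care that the extra "half-derivative" factors from odd-length words contribute only the claimed factor $4$ (rather than something worse) when combined over all $N$ iterations.
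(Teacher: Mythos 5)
Your proposal is correct and follows essentially the same route as the paper: expand $\mathcal{L}^{k}(fg)$ via the Leibniz rule into $(4d)^{k}$ terms of the form $(D_{i_{1}}\cdots D_{i_{m}}f)(D_{j_{1}}\cdots D_{j_{2k-m}}g)$, bound each word by $\|D_{i_{1}}\cdots D_{i_{m}}f\|_{L_{2}}\leq \omega^{m/2}\|f\|_{L_{2}}$ (the paper gets this from the identity $\|\mathcal{L}^{s/2}f\|^{2}_{L_{2}}=\sum\|D_{i_{1}}\cdots D_{i_{s}}f\|^{2}_{L_{2}}$, which is equivalent to your observation that each $D_{j}$ preserves ${\mathbf E}_{\omega}(\mathcal{L})$ with norm at most $\sqrt{\omega}$ there), estimate one factor in $L_{\infty}$ through Sobolev embedding and elliptic regularity, and finish with the spectral criterion that $\|\mathcal{L}^{k}h\|_{L_{2}}\leq C(4d\omega)^{k}$ for all $k$ forces $h\in{\mathbf E}_{4d\omega}(\mathcal{L})$. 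The only point to tighten is your provisional count $d^{2N}4^{N}$, which would yield only ${\mathbf E}_{4d^{2}\omega}(\mathcal{L})$; the correct bookkeeping (which you flag yourself) is $d^{k}$ words $D_{j_{1}}^{2}\cdots D_{j_{k}}^{2}$ with weight $4$ per application of $D_{j}^{2}$ to a product, giving exactly $(4d)^{k}$ and hence the stated bound $4d\omega$.
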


\begin{proof} First, we show that if for an $f\in L_{2}({\bf M})$ and a positive $\omega$ there exists a constant $C(f,\omega)$ such that the following inequalities hold 
\begin{equation}
\|\mathcal{L}^{k}f\|_{L_{2}(\bold M)}\leq C(f,\omega)\omega^{k}\|f\|_{L_{2}(\bold M)}
\end{equation}
for all natural $k$ then $f\in  {\mathbf E}_{\omega}(\mathcal{L})$.
Indeed, assume that
$$
\lambda_{m}\leq \omega<\lambda_{m+1}
$$ 
and
\begin{equation}
f=\sum_{j=0}^{\infty}c_{j}u_{j}\label{Fseries},
\end{equation}
$$
c_{j}(f)=<f,u_{j}>=\int_{{\bf M}}f(x)\overline{u_{j}(x)}dx.
$$
Then by the Plancherel Theorem
$$
\lambda_{m+1}^{2k}\sum_{j=m+1}^{\infty}|c_{j}|^{2}\leq
\sum_{j=m+1}^{\infty}|\lambda_{j}^{k}c_{j}|^{2}\leq
\|\mathcal{L}^{k}f\|_{L_{2}(\bold M)}^{2}\leq
C^{2}\omega^{2k}\|f\|_{L_{2}(\bold M)}^{2},\>\>\>C=C(f,\omega),
$$
which implies
$$
\sum_{j=m+1}^{\infty}|c_{j}|^{2}\leq C^{2}
\left(\frac{\omega}{\lambda_{m+1}} \right)^{2k}\|f\|_{L_{2}(\bold M)}^{2}.
$$
In the last inequality the fraction $\omega/\lambda_{m+1}$ is
strictly less than $1$  and $k$ can be any natural number. This
shows that the series (\ref{Fseries}) does not contain terms with
$j\geq m+1$, i.e.\ the function $f$ belongs to $
\textbf{E}_{\omega}(\mathcal{L})$.

Now, since every smooth vector field on ${\bf M}$ is a differentiation
of the algebra $C^{\infty}({\bf M})$, one has that for every operator
$D_{j}, 1\leq j\leq d,$ the following equality holds for any two
smooth functions $f$ and $g$ on ${\bf M}$:
\begin{equation}
D_{j}(fg)=fD_{j}g+gD_{j}f, \>\>\> 1\leq j\leq d.
\end{equation}
Using formula (\ref{Laplacian}) one can easily verify that for any
natural $k\in \mathbb{N}$, the term
$\mathcal{L}^{k}\left(fg\right)$ is a sum of $\>\>d^{k},
\>\>\>(d=dim G),$ terms of the following form:
\begin{equation}
D_{j_{1}}^{2}...D_{j_{k}}^{2}(fg),\ 1\leq j_{1},...,j_{k}\leq d.
\end{equation}
For every $D_{j}$ one has
$$
D_{j}^{2}(fg)=f(D_{j}^{2}g)+2(D_{j}f)(D_{j}g)+g(D_{j}^{2}f).
$$
Thus, the function $\mathcal{L}^{k}\left(fg\right)$ is a sum of
$(4d)^{k}$ terms of the form
$$
(D_{i_{1}}...D_{i_{m}}f)(D_{j_{1}}...D_{j_{2k-m}}g).
$$
This implies that
\begin{equation}
\left|\mathcal{L}^{k}\left(fg\right)\right|\leq
(4d)^{k}\sup_{0\leq m\leq 2k}\sup_{x,y\in
{\bf M}}\left|D_{i_{1}}...D_{i_{m}}f(x)\right|\left|D_{j_{1}}...D_{j_{2k-m}}g(y)\right|.\label{estim3}
\end{equation}
Let us show that the following inequalities hold:
\begin{equation}
\|D_{i_{1}}...D_{i_{m}}f\|_{L_{2}(\bold M)}\leq
\omega^{m/2}\|f\|_{L_{2}(\bold M)}\label{estim1}
\end{equation}
and
\begin{equation}
\|D_{j_{1}}...D_{j_{2k-m}}g\|_{L_{2}(\bold M)}\leq
\omega^{(2k-m)/2}\|g\|_{L_{2}(\bold M)}\label{estim2}
\end{equation}
for all $f,g \in {\mathbf E}_{\omega}(\mathcal{L})$.  First, we note  that the operator
$$
-\mathcal{L}=D_{1}^{2}+...+D_{d}^{2}
$$
commutes with every $D_{j}$ (see the explanation before the
formula (\ref{Laplacian}) ).
 The same is
true for $\mathcal{L}^{1/2}$. But then
$$
\|\mathcal{L}^{1/2}f\|_{L_{2}(\bold M)}^{2}=<\mathcal{L}^{1/2}f,\mathcal{L}^{1/2}f>=<\mathcal{L}
f,f>=
$$
$$
-\sum_{j=1}^{d}<D_{j}^{2}f,f>=\sum_{j=1}^{d}<D_{j}f,D_{j}f>=
\sum_{j=1}^{d}\|D_{j}f\|_{L_{2}(\bold M)}^{2},
$$
and also
$$
\|\mathcal{L}f\|_{L_{2}(\bold M)}^{2}=\|\mathcal{L}^{1/2}\mathcal{L}^{1/2}f\|_{L_{2}(\bold M)}^{2}=
\sum_{j=1}^{d}\|D_{j}\mathcal{L}^{1/2}f\|_{L_{2}(\bold M)}^{2}=
$$
$$
\sum_{j=1}^{d}\|\mathcal{L}^{1/2}D_{j}f\|_{L_{2}(\bold M)}^{2}=\sum_{j,k=1}^{d}\|D_{j}D_{k}f\|_{L_{2}(\bold M)}^{2}.
$$
From here by induction on $s\in \mathbb{N}$ one can  obtain the
following equality:
\begin{equation}
\|\mathcal{L}^{s/2}f\|_{L_{2}(\bold M)}^{2}=\sum_{1\leq i_{1},...,i_{s}\leq
d}\|D_{i_{1}}...D_{i_{s}}f\|_{L_{2}(\bold M)}^{2},\ s\in \mathbb{N}, \label{eq0}
\end{equation}
which implies the estimates (\ref{estim1}) and (\ref{estim2}). For
example, to get (\ref{estim1}) we take a function $f$ from
${\mathbf E}_{\omega}(\mathcal{L})$, an $m\in \mathbb{N}$ and  do the
following
\begin{equation}
\|D_{i_{1}}...D_{i_{m}}f\|_{L_{2}(\bold M)}\leq \left(\sum_{1\leq
i_{1},...,i_{m}\leq
d}\|D_{i_{1}}...D_{i_{m}}f\|_{L_{2}(\bold M)}^{2}\right)^{1/2}=
$$
$$
\|\mathcal{L}^{m/2}f\|_{L_{2}(\bold M)}\leq
\omega^{m/2}\|f\|_{L_{2}(\bold M)}.\label{estim4}
\end{equation}
 In a similar way we obtain (\ref{estim2}).

The formula
(\ref{estim3}) along with the formula (\ref{estim4}) imply the
estimate
\begin{equation}
\|\mathcal{L}^{k}(fg)\|_{L_{2}(\bold M)}\leq (4d)^{k}\sup_{0\leq m\leq
2k}\|D_{i_{1}}...D_{i_{m}}f\|_{L_{2}(\bold M)}\|D_{j_{1}}...D_{j_{2k-m}}g\|_{\infty}\leq
$$
$$(4d)^{k}\omega^{m/2}\|f\|_{L_{2}(\bold M)}\sup_{0\leq m\leq
2k}\|D_{j_{1}}...D_{j_{2k-m}}g\|_{\infty}.
\end{equation}
Using the Sobolev embedding Theorem and elliptic regularity of
$\mathcal{L}$, we obtain for every $s>\frac{dim {\bf M}}{2}$
\begin{equation}
\|D_{j_{1}}...D_{j_{2k-m}}g\|_{\infty}\leq
C({\bf M})\|D_{j_{1}}...D_{j_{2k-m}}g\|_{H^{s}({\bf M})}\leq
$$
$$
C({\bf M})\left\{\|D_{j_{1}}...D_{j_{2k-m}}g\|_{L_{2}(\bold M)}+
\|\mathcal{L}^{s/2}D_{j_{1}}...D_{j_{2k-m}}g\|_{L_{2}(\bold M)}\right\},
\end{equation}
where $H^{s}({\bf M})$ is the Sobolev space of $s$-regular functions on
${\bf M}$. Since the operator $\mathcal{L}$ commutes with each of the
operators $D_{j}$, the estimate (\ref{estim4}) gives the following
inequality:
\begin{equation}
\|D_{j_{1}}...D_{j_{2k-m}}g\|_{\infty}\leq
C({\bf M})\left\{\omega^{k-m/2}\|g\|_{L_{2}(\bold M)}+\omega^{k-m/2+s}\|g\|_{L_{2}(\bold M)}\right\}\leq
$$
$$
C({\bf M})\omega^{k-m/2}\left\{\|g\|_{L_{2}(\bold M)}+\omega^{s/2}\|g\|_{L_{2}(\bold M)}\right\}=
C({\bf M},g,\omega,s)\omega^{k-m/2},\>\>\>s>\frac{dim\  {\bf M}}{2}.
\end{equation}
Finally we have the following estimate:
\begin{equation}
\|\mathcal{L}^{k}(fg)\|_{L_{2}(\bold M)}\leq
C({\bf M},f,g,\omega,s)(4d\omega)^{k},\>\>\>s>\frac{dim \ {\bf M}}{2},\>\>k\in
\mathbb{N},
\end{equation}
which leads to our result.
The Theorem
is proved.
\end{proof}

\end{document}